\definecolor{darkgreen}{rgb}{0,0.5,0}
\definecolor{darkred}{rgb}{0.7,0,0}
\theoremstyle{plain}
\newtheorem{lemma}{Lemma}[section]
\newtheorem{thm}[lemma]{Theorem}
\newtheorem{prop}[lemma]{Proposition}
\newtheorem{cor}[lemma]{Corollary}
\theoremstyle{definition}
\newtheorem{rmk}[lemma]{Remark}
\numberwithin{equation}{section}
\newcommand{\h}{\ensuremath{{\mathcal H}}}
\newcommand{\p}{\ensuremath{{\mathcal P}}}
\newcommand{\al}{\alpha}
\newcommand{\ga}{\gamma}
\newcommand{\de}{\delta}
\newcommand{\la}{\lambda}
\newcommand{\vph}{\varphi}
\newcommand{\R}{\ensuremath{{\mathbb R}}}
\newcommand{\bx}{{\bf x}}
\newcommand{\downto}{\downarrow}
\newcommand{\upto}{\uparrow}
\newcommand{\lap}{\Delta}
\newcommand{\intersect}{\cap}
\newcommand{\norm}[1]{\left\Vert#1\right\Vert}  
\def\blbox{\quad \vrule height7.5pt width4.17pt depth0pt}
\newcommand{\beq}{\begin{equation}}
\newcommand{\eeq}{\end{equation}}
\newcommand{\beqa}{\begin{equation}\begin{aligned}}
\newcommand{\eeqa}{\end{aligned}\end{equation}}
\newcommand{\brmk}{\begin{rmk}}
\newcommand{\ermk}{\end{rmk}}
\newcommand{\partref}[1]{\hbox{(\csname @roman\endcsname{\ref{#1}})}}
\newcommand{\cmt}[1]{\opt{draft}{\textcolor[rgb]{0.5,0,0}{
$\LHD$ #1 $\RHD$\marginpar{\blbox}}}}
\newcommand{\pt}{\partial_t}
\newcommand{\abs}[1]{\left\vert#1\right\vert}
\title{{
\bf
Sharp decay estimates for the \\ logarithmic fast diffusion equation \\
and the Ricci flow on surfaces
} 
\\ 
\cmt{DRAFT with comments}
}
\author{Peter M. Topping and Hao Yin}
\date{15 December 2015}
\begin{document}

\maketitle

\begin{abstract}
We prove the sharp local $L^1-L^\infty$ smoothing estimate for the logarithmic fast diffusion equation, or equivalently, for the Ricci flow on surfaces.
Our estimate almost instantly implies an improvement of the known $L^p-L^\infty$ estimate for $p>1$.
It also has several applications in geometry, providing the missing step in order to pose the Ricci flow with rough initial data in the noncompact case, for example starting with a general noncompact Alexandrov surface, and giving the sharp asymptotics for the contracting cusp Ricci flow, as we show elsewhere.
\end{abstract}

\section{Introduction}
\label{intro}

\begin{thm}
\label{mainthm}
Suppose $u:B\times [0,T)\to (0,\infty)$ is a smooth solution to the equation
\beq
\label{LFDE}
\pt u=\lap\log u
\eeq
on the unit ball $B\subset\R^2$, and suppose that $u_0:=u(0)\in L^1(B)$. Then for all $\de>0$ (however small) and for any $k\geq 0$ and any time $t\in [0,T)$ satisfying 
$$t\geq \frac{\|(u_0-k)_+\|_{L^1(B)}}{4\pi}(1+\de),
\qquad\text{ we have }\qquad
\sup_{B_{1/2}}u(t)\leq C(t+k),$$
for some constant $C<\infty$ depending only on $\de$.
\end{thm}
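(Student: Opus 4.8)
The plan is to work with the local geometry: set $g(t) = u(t)(dx^2+dy^2)$ on $B$, so that \eqref{LFDE} says $g$ evolves by Ricci flow, $\partial_t g = -2K g$, and $u(t)$ being large at a point means $g(t)$ has large area density there. The key quantity to control is the area $A(t) := \int_{B_{1/2}} u(t)\,dx\,dy$ of the half-ball, or rather a localized version of it. Under Ricci flow one has $\frac{d}{dt}\int u = -\int 2K\,u = -2\int K\,dA$, and by Gauss--Bonnet the right-hand side is essentially $-2(2\pi\chi - \int_{\partial} k_g)$, so areas can only grow in a controlled, at-most-linear-in-$t$ fashion from the boundary contribution; this is the mechanism behind the $4\pi$ threshold (the conformal factor of the complete hyperbolic-type metric, or equivalently the rate at which an isolated puncture can absorb area). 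The subtraction of the constant $k$ is the standard trick: since \eqref{LFDE} is invariant under $u\mapsto u - k$ only at the level of the equation if we also shift time appropriately — more precisely $\log(u)$ and $\log(u-k)$ differ, so what one really uses is that $(u-k)_+$ is a subsolution in the relevant sense — one reduces to proving the bound $\sup_{B_{1/2}} v(t) \le Ct$ for the "truncated" solution under the hypothesis $t \ge \frac{\|v_0\|_{L^1}}{4\pi}(1+\delta)$ with $k=0$, and then restores $k$ at the end by $u \le (u-k)_+ + k$ and monotonicity/comparison.

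The core of the argument is then a local $L^1$--$L^\infty$ smoothing estimate with the sharp constant. First I would establish a \emph{local} mass-conservation/mass-loss inequality: for a suitable cutoff $\varphi$ supported in $B$ and equal to $1$ on $B_{3/4}$, differentiate $\int \varphi\, u$ and use $\partial_t u = \Delta \log u$ together with integration by parts and the elementary inequality $|\nabla \log u| \le$ (something controllable via an $\epsilon$-regularity / local gradient estimate for $\log u$) to show $\int_{B_{3/4}} u(t) \le \int_B u_0 + C_1 t$. Then, and this is the heart of the matter, I would run a De Giorgi / Moser iteration — or invoke an already-available local smoothing estimate — to pass from this local $L^1$ bound at positive times to the pointwise bound $\sup_{B_{1/2}} u(t) \le \frac{C}{t}\big(\int_{B_{3/4}} u(t/2)\,dx\,dy\big) + \dots$, but crucially tracking constants so that when $\int u_0$ is below the threshold $4\pi t/(1+\delta)$ the iteration closes with a constant depending only on $\delta$. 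In geometric terms: if the metric $g(t)$ on $B_{1/2}$ had a point of huge area density, then by the sharp form of the smoothing estimate it would have to carry area $\ge 4\pi t$ at earlier times (it would look like a long thin hyperbolic cusp / a large spherical cap), contradicting the area bound; the $(1+\delta)$ loss absorbs the boundary error terms and the imprecision in the cutoff.

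The main obstacle I anticipate is getting the \emph{sharp} constant $4\pi$ rather than some non-explicit $C$ — naive Moser iteration or naive use of the Aronson--Bénilan / Herrero--Pierre theory gives the right \emph{form} of the estimate but loses track of the optimal constant. To pin down $4\pi$ I expect one must exploit the scaling and the structure of the extremal solutions: the equation \eqref{LFDE} has the explicit self-similar "cusp"/"King" solutions $u(x,t) = \frac{8\mu t}{(\mu|x|^2+1)^2}$-type profiles (and their limits), for which $\|u(0)\|_{L^1} = 4\pi t$ exactly, and one wants the estimate to be saturated precisely by these. So the real work is a careful comparison argument — constructing, for each ball $B_r(x_0) \subset B_{1/2}$ and each time $t$, an explicit supersolution on a slightly larger ball built out of these extremal profiles with total mass just over $4\pi t/(1+\delta)$, matching the actual solution on the parabolic boundary using the $L^1$ hypothesis, and concluding by the comparison principle that $u(x_0,t)$ cannot exceed $C(\delta)(t+k)$. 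Managing the interplay between the size of the ball, the cutoff errors in the local $L^1$ bound, and the margin $\delta$ — so that everything fits together with a single constant $C(\delta)$ blowing up only as $\delta \to 0$ — is where the care is needed.
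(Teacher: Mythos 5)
Your proposal correctly identifies the shape of the statement (the reduction toward $k=0$, the role of extremal/self-similar solutions, the need for the $(1+\delta)$ margin), but neither of the two engines you propose can be made to work, and the actual mechanism behind the constant $4\pi$ is missing. First, the local mass inequality $\int_{B_{3/4}}u(t)\leq \int_B u_0+C_1t$ is not available by a cutoff computation: $\frac{d}{dt}\int\varphi u=\int\Delta\varphi\,\log u$ requires two-sided control of $\log u$ on an annulus at \emph{positive} times, which is precisely the boundary information the theorem refuses to assume (and $\log u\leq u$ only yields a Gronwall/exponential bound, not linear growth). Second, and more fundamentally, no De Giorgi/Moser iteration can produce this estimate: the $L^1$ endpoint genuinely fails for $t<\|u_0\|_{L^1}/4\pi$ (Remark \ref{sharp_rmk}: the rescaled cigar satisfies $u(0,1-\delta)\to\infty$), so the theorem is a \emph{waiting-time} phenomenon rather than an instantaneous smoothing estimate. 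An iteration scheme has no mechanism that ``sees'' the threshold $4\pi t$ and closes only beyond it; this is exactly why the constants in \cite{DDD} blow up as $p\downarrow 1$. Your fallback --- comparison with explicit supersolutions matched ``on the parabolic boundary'' --- founders on the same point: the comparison principle needs ordering on the spatial boundary at positive times, where $u$ is uncontrolled, and producing a supersolution dominating $u(t)$ from $L^1$ data at time $0$ alone is essentially the theorem itself. (Your reduction via ``$(u-k)_+$ is a subsolution'' is also unjustified: $w=u-k$ solves $\partial_t w=\Delta\log(w+k)$, which does not compare simply with $\partial_t w = \Delta\log w$; the paper instead replaces $k$ by $\alpha h$ with $\alpha=k/4$ and compares with the hyperbolic flows $(2t+\hat\alpha)h$.)

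The paper's route is quite different. One first enlarges $u$ to the unique \emph{complete}, maximally stretched flow $v\geq u$ starting from (a smoothing of) $\max\{u_0,h\}$, which pins the boundary behaviour: $v(t)/((2t+1)h)\to 1$ at $\partial B$. One then introduces a potential $\psi$ with $\Delta\psi(t)=v(t)-(2t+1)h$, $\psi|_{\partial B}=0$, and a Zhu/Guedj--Zeriahi-type differential Harnack quantity $\h=t\log\Delta\varphi-[\varphi(t)-\varphi(0)]$ satisfying $\partial_t\h\leq\Delta_v\h$, whence $\h\leq 0$ and the pointwise bound $v(t)\leq (2t+1)h\,e^{1-\psi(0)/t}$. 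The sharp constant $4\pi$ then enters analytically, not through self-similar profiles, via the Brezis--Merle exponential-integrability theorem: $\int_B e^{p|\psi(0)|/t}$ is finite for some $p>1$ precisely when $\|\Delta\psi(0)\|_{L^1}/t=\|v_0-h\|_{L^1}/t<4\pi$, i.e.\ exactly after the waiting time. A second pass through the same Harnack estimate, with Calder\'on--Zygmund theory replacing Brezis--Merle, upgrades the resulting $L^p$ bound to $L^\infty$. If you want to salvage your approach, the indispensable missing ingredients are (i) the completion/maximal-stretching step that tames the spatial boundary, and (ii) a borderline Moser--Trudinger-type inequality in which $4\pi$ is the critical constant.
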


The theorem gives an interior \emph{sup} bound for the logarithmic fast diffusion equation, depending only on the initial data, and \emph{not} on the boundary behaviour of $u$ at later times. This is in stark contrast to the situation for the normal linear heat equation on the ball, whose solutions can be made as large at the origin as desired in as short a time as desired, whatever the initial data $u_0$. We are crucially using the nonlinearity of the equation.

The theorem effectively provides an $L^1-L^\infty$ smoothing estimate. It has been noted \cite{Vazquez} that no $L^1-L^\infty$ smoothing estimate should exist for this equation, because such terminology would normally refer to an estimate that gave an explicit \emph{sup} bound in terms of $t>0$ and $\|u_0\|_{L^1}$, and this is impossible as we explain in Remark \ref{sharp_rmk}.
Our theorem circumvents this issue, 
and almost immediately implies
the following improvement of the well-known $L^p-L^\infty$ smoothing estimates for $p>1$ (see in particular
Davis-DiBenedetto-Diller \cite{DDD} and V\'azquez \cite{Vazquez})
in which the constant $C$ is universal, and in particular does not blow up as $p\downto 1$.
\begin{thm}
\label{DDD}
Suppose $u:B\times [0,T)\to (0,\infty)$ is a smooth solution to the equation
$\pt u=\lap\log u$
on the unit ball $B\subset\R^2$, and suppose that $u_0:=u(0)\in L^p(B)$ for some $p>1$. Then there exists a \emph{universal} $C<\infty$ such that for any $t\in (0,T)$ 
we have
$$\sup_{B_{1/2}}u(t)\leq C\left[
t^{-1/(p-1)}\|u_0\|_{L^p(B)}^{p/(p-1)} + t\right].$$
\end{thm}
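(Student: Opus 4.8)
The plan is to deduce this directly from Theorem~\ref{mainthm} by choosing the truncation level $k$ optimally in terms of $t$ and $\|u_0\|_{L^p}$. First I would fix $\delta=1$ (say) once and for all, so that the constant $C$ coming from Theorem~\ref{mainthm} is a fixed universal number; call it $C_0$. The key observation is the elementary interpolation bound controlling the truncated $L^1$ norm by the $L^p$ norm: for any $k>0$,
\[
\|(u_0-k)_+\|_{L^1(B)}
=\int_B (u_0-k)_+
\leq \int_{\{u_0>k\}} u_0
\leq k^{1-p}\int_B u_0^p
= k^{1-p}\|u_0\|_{L^p(B)}^p,
\]
using $u_0 \leq (u_0/k)^{p-1}u_0$ on the set $\{u_0>k\}$ and $p>1$. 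So if we want the hypothesis of Theorem~\ref{mainthm} to hold at a given time $t$, namely $t\geq \frac{1}{2\pi}\|(u_0-k)_+\|_{L^1(B)}$, it suffices to pick $k$ so that $k^{1-p}\|u_0\|_{L^p}^p \leq 2\pi t$, i.e.
\[
k \;=\; \Big(\tfrac{1}{2\pi t}\Big)^{1/(p-1)}\|u_0\|_{L^p(B)}^{p/(p-1)}.
\]

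With this choice of $k$, Theorem~\ref{mainthm} applies at time $t$ and gives $\sup_{B_{1/2}}u(t)\leq C_0(t+k)$. Substituting the value of $k$ yields
\[
\sup_{B_{1/2}}u(t)\;\leq\; C_0\Big[(2\pi)^{-1/(p-1)}\,t^{-1/(p-1)}\|u_0\|_{L^p(B)}^{p/(p-1)} + t\Big].
\]
The factor $(2\pi)^{-1/(p-1)}$ lies in $(0,1]$ for $p>1$, so it is harmless and can be absorbed; alternatively, since we only need an inequality, we can bound it by $1$. This gives exactly the claimed estimate with a universal $C$ (e.g. $C=C_0$), and crucially $C$ does not depend on $p$ because $C_0$ was the fixed constant from Theorem~\ref{mainthm} at $\delta=1$.

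The only subtlety worth a remark is the endpoint behaviour as $p\downarrow 1$: the interpolation inequality degenerates (the exponents $1/(p-1)$ and $p/(p-1)$ blow up), so the \emph{estimate} itself degenerates, but the \emph{constant} $C$ does not — which is the whole point of deriving it from Theorem~\ref{mainthm} rather than from a direct energy argument. There is no real obstacle here; the one thing to be careful about is simply that the truncation level $k$ we select is strictly positive and finite for every $t\in(0,T)$, which it is, so Theorem~\ref{mainthm} is legitimately applicable at each such $t$. I would present this as a short corollary-style argument immediately following the proof of Theorem~\ref{mainthm}.
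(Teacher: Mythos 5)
Your proposal is correct and is essentially the paper's own argument: both deduce the result from Theorem~\ref{mainthm} by choosing the truncation level $k$ so that the time hypothesis holds at time $t$, and both use the Chebyshev-type bound $k^{p-1}\|(u_0-k)_+\|_{L^1(B)}\leq\|u_0\|_{L^p(B)}^p$ to convert this into the stated estimate. The only cosmetic difference is that the paper (in its slightly stronger Theorem~\ref{DDDstrong}) defines $k$ implicitly by requiring equality in the time condition and splits off the case $t\geq\frac{\|u_0\|_{L^1(B)}}{4\pi}(1+\de)$, whereas you choose $k$ explicitly and absorb the harmless factor $(2\pi)^{-1/(p-1)}\leq 1$.
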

In fact, in Section \ref{DDDsect} we will state and prove a slightly stronger result.

The key to understanding Theorem \ref{mainthm}, and to obtaining the sharpest statement, is to understand its geometric setting. That a Riemannian metric
$g=u.(dx^2+dy^2)$ on the ball $B$ evolves under the Ricci flow equation \cite{ham3D, RFnotes} $\pt g = -2Kg$, where $K=-\frac{1}{2u}\lap\log u$ is the Gauss curvature of $g$, is equivalent to the conformal factor $u$ solving \eqref{LFDE}.
Meanwhile, the unique complete hyperbolic metric (the Poincar\'e metric) has a conformal factor\footnote{We abuse terminology occasionally by referring to the function $h$ itself as the hyperbolic metric.}
$$h(\bx):=\left(\frac{2}{1-|\bx|^2}\right)^2,$$
and induces hyperbolic Ricci flows, or equivalently solutions of \eqref{LFDE}, given by
$(2t+\alpha)h$ for arbitrary $\al\in\R$.
Note here that the Gauss curvature of $h$ is given by
\beq
\label{Kh}
K=-\frac{1}{2h}\lap\log h \equiv -1.
\eeq

The sharp form of our main theorem asserts that an appropriate scaling of a hyperbolic Ricci flow will eventually overtake any other Ricci flow, and will do so in a time that is determined only in terms of the distribution of area, relative to a scaled hyperbolic metric, of the initial data.
\begin{thm}
\label{mainthmstrong}
Suppose $u:B\times [0,T)\to (0,\infty)$ is a smooth solution to the equation
$\pt u=\lap\log u$, with initial data $u_0:=u(0)$
on the unit ball $B\subset\R^2$, and suppose that $(u_0-\al h)_+\in L^1(B)$
for some $\al\geq 0$. 
Then for all $\de>0$ (however small) and any time $t\in [0,T)$ satisfying 
$$t\geq \frac{\|(u_0-\al h)_+\|_{L^1(B)}}{4\pi}(1+\de),
\quad\text{ we have }\quad
\sup_{B}\frac{u(t)}{h}\leq 2t+C\left(\al+\|(u_0-\al h)_+\|_{L^1(B)}\right),$$
where $C<\infty$ depends only on $\de$.
\end{thm}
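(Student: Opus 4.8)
The plan is to reduce the statement, by means of an upper barrier, to a bound on $u/h$ at a single time, and then to extract the sharp constant $4\pi$ from a comparison against an evolving hyperbolic cusp. Throughout write $M:=\|(u_0-\al h)_+\|_{L^1(B)}$ and, for a solution $u$, $V(t):=\sup_B u(t)/h-2t$; we may assume $M>0$, since for $M=0$ the first step already gives the conclusion with $C=1$.

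\emph{Step 1 (forward propagation; reduction to one time).} If at some time $s_*$ one has $u(s_*)\le\mu h$ on $B$, I would compare $u$ on $B_\rho\times[s_*,t]$ with the exact solution $(2(t-s_*)+\mu)h_\rho$, where $h_\rho(\bx)=\big(\tfrac{2\rho}{\rho^2-|\bx|^2}\big)^2$ is the Poincar\'e conformal factor of $B_\rho$, which satisfies $h_\rho\ge h$ on $B_\rho$ and equals $+\infty$ on $\partial B_\rho$ (so no hypothesis on $u$ near $\partial B$ is needed); letting $\rho\uparrow 1$ then gives $u(t)\le(2(t-s_*)+\mu)h$ on $B$ for all $t\in[s_*,T)$. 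In particular $V$ is non-increasing wherever finite — this also follows directly from the maximum principle, since at an interior maximum of $\log u-\log h$ one has $\Delta\log u\le\Delta\log h=2h$ and hence $\tfrac{d}{dt}(u/h)\le 2$ there. Consequently it suffices to prove $V(t_0)\le C(\de)(\al+M)$ at the single time $t_0:=\tfrac{M}{4\pi}(1+\de)$, for the bound then propagates forward to exactly the range of times in the statement.

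\emph{Step 2 (a monotone excess-area quantity).} Set $D(t):=\{x\in B:\ u(t,x)>(2t+\al)h(x)\}$ and $W(t):=\int_{D(t)}\big(u(t)-(2t+\al)h\big)\,dx$, the amount by which the area of $(D(t),u(t)(dx^2+dy^2))$ exceeds $(2t+\al)$ times its hyperbolic area. Differentiating, the contribution of the moving boundary $\partial D(t)$ vanishes since $u-(2t+\al)h\equiv 0$ there, and using $\pt u=\Delta\log u$ and $\Delta\log h=2h$ one obtains $W'(t)=\int_{\partial D(t)}\partial_\nu\log(u/h)\,ds\le 0$, because $\log(u/h)$ attains its minimum over $\overline{D(t)}$ along $\partial D(t)$. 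Hence $W(t)\le W(0)=M$ for all $t$. (The level-set manipulations are standard given the smoothness of $u$ on $B\times[0,T)$; working first with the sets $\{u>(2t+\al)h+\eta\}$ and letting $\eta\downarrow0$ also shows $D(t)$ stays a positive distance from $\partial B$, so $V(t)<\infty$ for $t>0$ and Step~1 applies.)

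\emph{Step 3 (the crux: from excess area to the sup bound).} Suppose $A:=V(t_0)$ is large; let $x_0$ satisfy $u(t_0,x_0)=(2t_0+A)h(x_0)$ and fix a small $\ep=\ep(\de,M)>0$. By monotonicity of $V$, $V(\ep)\ge A$, so $u(\ep)$ is large relative to $h$ at some point $x_\ep$; because $\ep>0$ the equation has already regularised $u$, so $u(\ep)$ cannot be too concentrated near $x_\ep$, and one can place a suitably scaled and localised evolving hyperbolic cusp (a copy of the contracting cusp) underneath $u(\ep)$ with excess area over $\al h$ at least $4\pi(t_0-\ep)+1$, provided $A$ is large enough. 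The key geometric fact is that such a cusp solution sheds its excess area over $(2t+\al)h$ at rate \emph{exactly} $4\pi$; hence comparison forces $u(t)$ to carry at least $4\pi(t_0-t)-o(1)$ units of excess area near the moving cusp for $t\in[\ep,t_0]$, and so $W(\ep)\ge 4\pi(t_0-\ep)-o(1)$. Combined with $W(\ep)\le M=\tfrac{4\pi t_0}{1+\de}$ this yields $\de M\le 4\pi\ep+o(1)$, a contradiction once $\ep$ and the threshold for $A$ are chosen appropriately in terms of $\de$, $M$ and $\al$. I expect this last step to be the main obstacle: proving, with the \emph{sharp} rate $4\pi$, that a solution which is large relative to $h$ at a single point must shed that much area requires a careful blow-up/compactness analysis near $x_\ep$ together with precise control of the evolving cusp; and it is exactly the finite-$A$ defect in ``rate exactly $4\pi$'', together with the impossibility of running the comparison all the way back to $t=0$, that force the loss of $\de$ and the term $C(\de)(\al+M)$ — rather than merely $\al$ — on the right-hand side.
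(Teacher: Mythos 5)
Your Steps 1 and 2 are reasonable: Step 1 is essentially the paper's own first reduction (its Claim 1 uses the maximally stretched flow $(2t+C(m+\al))h$ as the forward barrier), and the monotone excess-area quantity $W$ of Step 2 is a correct-looking and genuinely different observation --- modulo the unaddressed possibility that $D(t)$ accumulates at $\partial B$: $u$ is only assumed smooth on the open ball, so nothing prevents $\{u>(2t+\al)h\}$ from reaching the boundary, and your $\eta$-regularisation does not obviously fix this since $u$ may blow up at $\partial B$ as fast as $h$ does. The fatal problem is Step 3, which you yourself flag. The assertion that a large value of $u(t_0)/h$ at one point forces $u(\ep)$ to dominate a localised contracting-cusp subsolution of excess area about $4\pi(t_0-\ep)$ is not a lemma you can quote or readily prove: it \emph{is} the theorem, in contrapositive form. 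The supporting heuristic ``because $\ep>0$ the equation has already regularised $u$, so $u(\ep)$ cannot be too concentrated near $x_\ep$'' is false in the relevant regime: the rescaled cigar of Remark \ref{sharp_rmk} stays arbitrarily concentrated (converging to $4\pi(1-t)\de_0$ as $\mu\downto 0$) for every $t<1$, so the equation alone provides no quantitative de-concentration at time $\ep$. Moreover a \emph{localised} cusp is no longer an exact solution, so ``sheds area at rate exactly $4\pi$'' acquires error terms that must be controlled uniformly as $A\to\infty$, and the sub-barrier comparison itself needs care (completeness or boundary control is required for comparison in this equation). As written, Step 3 restates the difficulty rather than resolving it.

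For contrast, the paper reaches the sharp constant by an entirely different mechanism: it passes to the complete maximally stretched flow $v\geq u$ starting above $\max\{u_0,h\}$, introduces a potential $\psi$ with $\lap\psi = v-(2t+1)h$ and $\psi|_{\partial B}=0$, proves the differential Harnack inequality $\h\leq 0$ for $\h = t\log\lap\vph - [\vph(t)-\vph(0)]$ via the maximum principle on the complete evolving metric, deduces $v(t)\leq (2t+1)h\exp[1-\psi(0)/t]$, and then the constant $4\pi$ enters as the sharp exponential-integrability constant in the Brezis--Merle estimate applied to $\lap\psi(0)=v_0-h\in L^1$; a second iteration using Calderon--Zygmund theory upgrades the resulting $L^p$ bound to $L^\infty$. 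If you want to pursue your area-comparison route, the missing ingredient you must supply as a genuine lemma is the quantitative lower-barrier construction of Step 3; the paper's potential-theoretic route avoids this entirely.
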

In other words, if we define $\hat \al:= C\left(\al+\|(u_0-\al h)_+\|_{L^1(B)}\right)$, then $u(t)$ will be overtaken by the self-similar solution
$(2t+\hat\al)h$ after a definite amount of time.

\brmk
\label{invariance_rmk}
To fully understand Theorem \ref{mainthmstrong}, it is important to note the geometric invariance of all quantities. In general, given a Ricci flow defined on a neighbourhood of some point in some surface, one can choose local isothermal coordinates near to the point in many different ways, and this will induce different conformal factors. However, the ratio of two conformal factors, for example $\frac{u(t)}{h}$, \emph{is} invariantly defined. 
In particular, if the flow is pulled back by a M\"obius diffeomorphism $B\mapsto B$ (i.e. an isometry of $B$ with respect to the hyperbolic metric, which thus leaves $h$ invariant but changes $u$ in general) then the supremum of this ratio is unchanged.
Similarly, the quantity $\|(u_0-\al h)_+\|_{L^1(B)}$ is invariant under pulling back by M\"obius maps, which is instantly apparent by viewing it as the $L^1$ norm of the invariant quantity $(\frac{u_0}{h}-\al )_+$ with respect to the (invariant) hyperbolic metric rather than the Euclidean metric.
\ermk

\brmk
\label{origin_enough}
An immediate consequence of Remark \ref{invariance_rmk} (and the fact that one can pick a M\"obius diffeomorphism mapping an arbitrary point to the origin) is that to control the supremum of $\frac{u(t)}{h}$ over the whole ball $B$, we only have to control it at the origin.
\ermk

\brmk
\label{apply_to_smaller_balls}
Continuing Remark \ref{invariance_rmk}, it is also convenient to note that the theorem really only requires a Ricci flow on a Riemann surface conformal to a disc in order to apply. This surface then admits a unique complete hyperbolic metric, and all quantities make sense without the need to explicitly pull back to the underlying disc. We will take this viewpoint when we are already considering a Ricci flow on a disc $B$, but wish to apply the theorem on some smaller sub-ball, for example on $B_\rho$ where the hyperbolic metric has larger conformal factor
$$h_\rho(\bx)=\frac{1}{\rho^2}h\left(\frac{\bx}{\rho}\right)$$
as graphed in Figure \ref{fig1}.

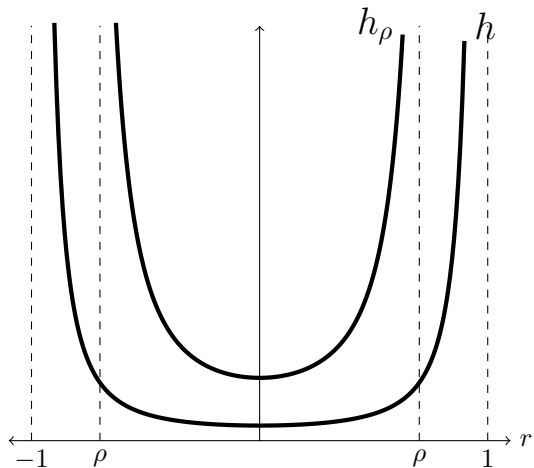
\begin{figure}
\centering
\begin{tikzpicture}[xscale=3,yscale=0.5]

\newcommand\scalerho{0.7}
\newcommand\rlimit{0.9}
\newcommand\reduction{(\scalerho*\scalerho+\rlimit*\rlimit-1)/((\scalerho*\scalerho)*(\rlimit*\rlimit))}
\newcommand\axisheight{11}

\draw [<->] (0,\axisheight) -- (0,0) -- (1.1,0) node[right]{$r$};
\draw [->] (0,0) -- (-1.1,0);

\draw[ultra thick, domain=-\rlimit:\rlimit, samples=200] 
plot (\x, {
0.4/((1-\x * \x)*(1-\x * \x))
});

\draw[ultra thick, domain=-\rlimit*\scalerho:\rlimit*\scalerho, samples=200] 
plot (\x, {
0.4/((\scalerho*\scalerho-\reduction*\x * \x)*(\scalerho*\scalerho-\reduction*\x * \x))
});

\draw [dashed, thin] (\rlimit,\axisheight) node[right]{{\Large $h$}};
\draw [dashed, thin] (\rlimit*\scalerho,\axisheight) node[left]{{\Large $h_\rho$}};

\draw [dashed, thin] (-1,0) node[below]{$-1$} -- (-1,\axisheight);
\draw [dashed, thin] (1,0) node[below]{$1$} -- (1,\axisheight);
\draw [dashed, thin] (\scalerho,0) node[below]{$\rho$} -- (\scalerho,\axisheight);
\draw [dashed, thin] (-\scalerho,0) node[below]{$\rho$} -- (-\scalerho,\axisheight);


\end{tikzpicture}
\caption{Conformal factors of hyperbolic metrics $h$ on $B$ and $h_\rho$ on $B_\rho$.}
\label{fig1}
\end{figure}

This viewpoint is also helpful in order to appreciate that our \emph{local} results apply to arbitrary Ricci flows on arbitrary surfaces, even noncompact ones: One can always take local isothermal coordinates $(x,y)\in B$ and apply the result.
\ermk

To see that Theorem \ref{mainthm} follows from Theorem \ref{mainthmstrong},
for our given $k$ we set $\al=k/4$. Then $\al h\geq k$ on $B$, so
$(u_0-k)_+\geq (u_0-\al h)_+$, and any time valid in Theorem \ref{mainthm} will also be valid in Theorem \ref{mainthmstrong}, i.e.
$$t\geq \frac{\|(u_0-k)_+\|_{L^1(B)}}{4\pi}(1+\de)
\qquad\implies\qquad
t\geq \frac{\|(u_0-\al h)_+\|_{L^1(B)}}{4\pi}(1+\de).$$
We can deduce that
\beqa
u(t) 
&\leq 
\left(2t+C\left(\al+\|(u_0-\al h)_+\|_{L^1(B)}\right)\right)h\\
&\leq
C(t+k)h,
\eeqa
on $B$, and restricting to $B_{1/2}$, where $h\leq 64/9$, completes the proof.

\brmk
\label{sharp_rmk}
To see that Theorem \ref{mainthm} is sharp, consider the so-called cigar soliton flow metric
defined on $\R^2$ by 
$$\tilde u(\bx,t)=\frac{1}{e^{4t}+|\bx|^2},$$
which solves \eqref{LFDE}.
This is a so-called steady soliton, meaning that the metric at time $t=0$ is isometric to the metric at any other time $t$
(here via the diffeomorphism $\bx\mapsto e^{2t}\bx$).
Geometrically, the metric looks somewhat like an infinite half cylinder with the end capped off.
It is more convenient to consider the scaled version of this solution given by
$$u(\bx,t)=\frac{4}{\log(\mu^{-1}+1)\left[(1+\mu)^t\mu^{1-t}+|\bx|^2\right]},$$
for $\mu>0$ small.
The scaling is chosen here so that $\|u_0\|_{L^1(B)}=4\pi$, and thus Theorem \ref{mainthm} tells us that if we wait until time $t=1+\de$, then we obtain a  bound on $u(0,t)$ (for example) that only depends on $\de$, and not $\mu$.
However, we see that 
$$u(0,1-\de)=\frac{4}{\log(\mu^{-1}+1)(1+\mu)^{1-\de}\mu^{\de}}\to\infty,$$
as $\mu\downto 0$, for fixed $\de>0$ (however small), so no such upper bound is available just \emph{before} the special time $t=1$.

Note that for each $t\in [0,1)$, in the limit $\mu\downto 0$ we have 
$$u(\bx,t)\to 4\pi (1-t)\de_0$$
as measures, where $\de_0$ represents the delta function at the origin. This connects with the discussion of V\'azquez \cite{vaz_point_masses}.
\ermk

\brmk
In \cite{TY2}, we  apply Theorem \ref{mainthmstrong} to give the sharp asymptotics of the conformal factor of the contracting cusp Ricci flow as constructed in \cite{revcusp}. As a result, we  obtain the sharp decay rate for the curvature as conjectured in \cite{revcusp}.
\ermk

\section{Proof of the main theorem}
\label{proof_of_main_theorem_sect}
In this section, we prove Theorem \ref{mainthmstrong}, which implies Theorem \ref{mainthm} as we have seen.
The proof will involve considering a potential that is an inverse Laplacian of the solution $u$. Note that this is different from the potential considered by Hamilton and others in this context, which is an inverse Laplacian of the curvature. Indeed, the curvature arises from the potential we consider by application of a \emph{fourth} order operator. Nevertheless, our potential 
can be related to the potential considered in K\"ahler geometry. Our approach is particularly close to that of \cite{Guedj}, from which the main principles of this proof are derived. In contrast to that work, however, our result is purely local, and will equally well apply to noncompact Ricci flows.

The main inspiration leading to the statement of Theorem \ref{mainthm} was provided by the examples constructed by the first author and Giesen \cite{GT3, GT4}.

\subsection{Reduction of the problem}
\label{reduction_sect}

In this section, we successively reduce Theorem \ref{mainthmstrong} to the simpler Proposition \ref{key_prop2}.
Consider first, for $m\geq 0$, $\al\geq0$, the following assertion.

{\bf Assertion $\p_{m,\al}$}:
For each $\de\in (0,1]$, there exists $C<\infty$ with the following property.
For each smooth solution $u:B\times [0,T)\to (0,\infty)$ 
to the equation $\pt u=\lap\log u$ with initial data $u_0:=u(0)$
on the unit ball $B\subset\R^2$, if
$$\|(u_0-\al h)_+\|_{L^1(B)}\leq m,$$
then 
$$u(t_0)\leq C(m+\al)h\qquad\text{ throughout }B, \text{ at time }
t_0= \frac{m}{4\pi}(1+\de),$$
provided $t_0<T$.

{\bf Claim 1:} Theorem \ref{mainthmstrong} follows if we establish $\{\p_{m,\al}\}$
for every $m\geq 0$, $\al\geq 0$.

\begin{proof}[Proof of Claim 1]
First observe that we may as well assume $\de\in(0,1]$ in the theorem, since the cases that $\de>1$ follow from the case $\de=1$.
Take $\al$ from the theorem and set $m=\|(u_0-\al h)_+\|_{L^1(B)}$.
Assertion $\p_{m,\al}$ tells us that
$u(t_0)\leq C(m+\al)h$ throughout $B$ at time $t_0= \frac{m}{4\pi}(1+\de)$
(unless $t_0\geq T$, in which case there is nothing to prove).
But 
$$t\mapsto (2t+C(m+\al))h$$
is the maximally stretched Ricci flow starting at $C(m+\al)h$ (i.e. it is the unique complete Ricci flow starting with this initial data, and thus agrees with the maximal flow that lies above any other solution \cite{GT2, ICRF_UNIQ})
and thus
\beq
u(t)\leq (2(t-t_0)+C(m+\al))h\leq (2t+C(m+\al))h
\eeq
for all $t\geq t_0$, as desired.
\end{proof}

It remains to prove the assertions $\p_{m,\al}$, but first we make some further reductions. 
To begin with, we note that the assertions $\p_{0,\al}$ are trivial, because $u_0\leq \al h$ and $t_0=0$ in that case, so we may assume that $m>0$.

By parabolic rescaling by a factor $\la>0$, more precisely by replacing $u$ with $\la u$, $u_0$ with $\la u_0$, 
and $t$ with $\la t$, we see that in fact $\p_{m,\al}$ is equivalent to $\p_{\la m,\la \al}$.
Thus, we may assume, without loss of generality, that $m=1$ and prove only the assertions $\p_{1,\al}$ for each $\al\geq 0$. 

Next, it is clear that assertion $\p_{1,1}$ implies $\p_{1,\al}$ for every $\al\in [0,1]$. Indeed, in the setting of $\p_{1,\al}$ ($\al\leq 1$), we can apply 
assertion $\p_{1,1}$ to deduce that $u(t_0)\leq C(1+1)h\leq (2C)(1+\al)h$.

What is a little less clear is:

{\bf Claim 2:} Assertion $\p_{1,1}$ implies $\p_{1,\al}$ for every $\al\geq 1$.

\begin{proof}[Proof of Claim 2:]
By the invariance of the assertion $\p_{1,\al}$ under pull-backs by M\"obius maps (see Remark \ref{origin_enough}), it suffices to prove that
$u(t_0)\leq C(1+\al)h$ at the origin in $B$, where $t_0=\frac{1}{4\pi}(1+\de)$. This in turn would be implied by the assertion $u(t_0)\leq C h_{\al^{-1/2}}$ at the origin in $B$.

The assumption $\|(u_0-\al h)_+\|_{L^1(B)}\leq 1$ implies 
$\|(u_0- h_{\al^{-1/2}})_+\|_{L^1(B_{\al^{-1/2}})}\leq 1$ because
$$h_{\al^{-1/2}}(\bx)=\al. h\left({\al^{1/2}}\bx\right)\geq
\al h\left(\bx\right)$$
(recall $\al\geq 1$).
Therefore we can invoke $\p_{1,1}$ on $B_{\al^{-1/2}}$ (cf. Remark \ref{apply_to_smaller_balls}) to deduce that
$$u(t_0)\leq C\/h_{\al^{-1/2}}$$
at the origin as required.
\end{proof}
Thus our task is reduced to proving that Assertion $\p_{1,1}$ holds.

Keeping in mind Remark \ref{origin_enough} again, we are reduced to proving:

\begin{prop}
\label{key_prop}
For each smooth solution $u:B\times [0,T)\to (0,\infty)$ 
to the equation $\pt u=\lap\log u$ with initial data $u_0:=u(0)$,
if
\beq
\label{Prop2.1hyp}
\|(u_0- h)_+\|_{L^1(B)}\leq 1,
\eeq
then for each $\de\in (0,1]$, we have
\beq
\label{main_prop_assertion}
u(t_0)\leq C.h\qquad\text{ at the origin, at time }
t_0= \frac{1}{4\pi}(1+\de),
\eeq
provided $t_0<T$, where $C$ depends only on $\de$.
\end{prop}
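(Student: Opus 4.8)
The plan is to follow the strategy of \cite{Guedj} from Kähler geometry, but adapted to this purely local setting. The central idea is to introduce a \emph{potential} $\varphi$ for the solution, namely a function with $\lap\varphi = u$ in a suitable sense, and to study its evolution. Since $\pt u = \lap\log u$, the potential should evolve (up to harmonic corrections) by $\pt\varphi = \log u$, and one can try to control $u$ at the origin by controlling $\varphi$ and using a parabolic estimate. Concretely, I would first construct, on a slightly smaller ball $B_r$ with $\frac12<r<1$, a function $\varphi(\cdot,t)$ solving $\lap\varphi = u$ with zero boundary values on $\partial B_r$ (so $\varphi\le 0$), together with $\pt\varphi = \log u + (\text{harmonic term forced by the boundary condition})$. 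Comparison with the hyperbolic metric gives a clean reference: since $\lap\log h = -2h<0$, the function $\log h$ is superharmonic, and $h$ has its own potential. The hypothesis $\|(u_0-h)_+\|_{L^1(B)}\le 1$ says precisely that $u_0$ exceeds $h$ by a total mass at most $1$, which by the Green's function representation translates into a pointwise lower bound on the initial potential $\varphi_0$ at the origin, with a deficit controlled by $\frac{1}{4\pi}$ times the logarithmic Green kernel --- this is where the numerology $\frac{1}{4\pi}$ and the critical time enter.

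The key steps, in order, would be: (1) set up the potential $\varphi$ and derive its evolution equation, verifying $\pt\varphi = \log u$ modulo a nonnegative harmonic error that can only help the lower bound; (2) establish an \emph{a priori upper bound} $\varphi\le 0$ from the boundary condition and the sign of $u>0$, and show $\varphi$ is monotone or near-monotone in $t$ in a controlled way; (3) prove a \emph{lower bound on $\varphi(0,t_0)$}: integrating $\pt\varphi=\log u$ in time and using Jensen's inequality (concavity of $\log$) against the $L^1$ mass of $u$, which stays controlled since $\int_{B_r} u(t)$ grows at most linearly (it satisfies $\frac{d}{dt}\int_{B_r}u = \int_{\partial B_r}\partial_\nu\log u$, and the curvature sign / the fact that $u$ is overtaken by hyperbolic flows bounds this); (4) combine with the initial deficit estimate from the hypothesis to get $\varphi(0,t_0)\ge -\frac{1}{4\pi}(1+\de/2)\log(\text{something}) + \dots$; and finally (5) a \emph{local parabolic bound} converting control of $\varphi$ (an $L^1$-type quantity, since $\varphi$ is essentially $\lap^{-1}u$) into a pointwise sup bound on $u$ at the origin at the slightly later time $t_0$ --- this is a smoothing step, using the equation $\pt u=\lap\log u$ and a Moser-type or Aleksandrov--Bakelman--Pucci-type argument, and exploiting that a bound on $\varphi$ forces the mass of $u$ near the origin to be small unless $u$ has already been flattened.

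The main obstacle, I expect, is step (3)–(5): quantitatively converting the potential estimate into the sharp pointwise bound with the \emph{exact} critical time $\frac{1}{4\pi}(1+\de)$ and no loss. The difficulty is that $\log u$ is only controlled from above in an averaged ($L^1$) sense, so applying Jensen requires knowing the total mass $\int u$ does not blow up, while the sup bound we are after is exactly a statement that $\int u$ near the origin is controlled --- there is a genuine bootstrap here, resolved in \cite{Guedj} by a clever iteration. One must also handle the boundary terms carefully: the whole point of the theorem is that the bound does \emph{not} depend on boundary behaviour of $u(t)$, so any boundary flux appearing in $\frac{d}{dt}\int_{B_r}u$ or in the harmonic correction to $\pt\varphi$ must be shown to have a favourable sign (it is an inward flux of $\log u$, controllable because the flow is a shrinking one dominated by hyperbolic flows) rather than estimated in absolute value. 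The factor $(1+\de)$ is the slack that makes this work: it gives room to absorb the error terms in the Jensen step and in the final smoothing iteration, so that as $\de\downto0$ the constant $C$ is allowed to blow up, consistent with the sharpness example in Remark \ref{sharp_rmk}.
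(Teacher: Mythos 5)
You have correctly identified the general framework (a potential $\vph$ with $\lap\vph=u$, following Guedj--Zeriahi, with the $(1+\de)$ slack absorbing errors), but two essential mechanisms of the actual proof are missing, and the route you sketch in their place would fail. First, you propose to work with $u$ itself on a smaller ball and to argue that the boundary flux of $\log u$ has a favourable sign "because the flow is dominated by hyperbolic flows". This is not justified and is precisely the difficulty: nothing is assumed about $u$ near $\partial B$ at positive times. The paper circumvents this by replacing $u$ with the \emph{larger, complete, maximally stretched} solution $v(t)$ starting from (a smoothing of) $\max\{u_0,h\}$; completeness forces $v(t)/((2t+1)h)\to 1$ at $\partial B$, so the potential $\psi$ solving $\lap\psi(t)=v(t)-(2t+1)h$ with zero boundary data is well behaved and a maximum principle can legitimately be applied on the complete manifold. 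Second, your step (3)--(5) asks for a pointwise lower bound on the initial potential at the origin from the $L^1$ hypothesis, followed by a Jensen/Moser-type smoothing step. No such pointwise bound exists: the mass of $(u_0-h)_+$ can concentrate at the origin (this is exactly the cigar example of Remark \ref{sharp_rmk}), making $\psi(0)(\mathbf{0})$ arbitrarily negative. What is true, and what the proof actually uses, is the Brezis--Merle \emph{exponential integrability} theorem: $e^{-p\psi(0)/t}\in L^1(B)$ precisely when $p\|v_0-h\|_{L^1}<4\pi t$, which is where the critical time $\frac{1}{4\pi}$ enters --- not through the Green kernel at a point.

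The pointwise conclusion then comes from a differential Harnack inequality rather than from a Moser iteration: the quantity $\h=t\log\lap\vph-[\vph(t)-\vph(0)]$ satisfies $\pt\h\leq\lap_v\h$ and is bounded above, so the maximum principle gives $\h\leq 0$, which rearranges to the pointwise bound $v(t)\leq (2t+1)h\,\exp[1-\psi(0)/t]$. Combining this with Brezis--Merle at an intermediate time $\tilde t=\frac{1}{4\pi}(1+\de/2)$ yields only $L^p$ control of $v(\tilde t)/h$ for some $p>1$; a second application of the same Harnack estimate, restarted at $\tilde t$ on $B_{1/2}$ with the new initial potential now bounded in $L^\infty$ by Calderon--Zygmund, produces the sup bound at $t_0$. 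You correctly sensed that "there is a genuine bootstrap here", but without the Harnack quantity, the exponential integrability input, and the passage to the dominating complete flow, the bootstrap cannot be closed along the lines you describe.
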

%
%
In this proposition, we make no assumptions on the growth of $u$ near $\partial B$ other than what is implied by \eqref{Prop2.1hyp}.
However, we may assume without loss of generality that $u(t)$ is smooth up to the boundary $\partial B$. To get the full assertions, we can apply this apparently weaker case on the restrictions of the flow to $B_\rho$, for $\rho\in (0,1)$, and then let $\rho\upto 1$.
(Recall Remark \ref{apply_to_smaller_balls}.)

Instead of estimating $u(t)$, we will estimate a larger solution $v(t)$ of the flow arising as follows. We would like to define new initial data $v_0$ on $B$ by
$$v_0(\bx):=\max\{u_0(\bx), h(\bx)\},$$
and then solve forwards in time to give $v(t)$. This is certainly possible, but it will be technically simpler to consider a smoothed out (and even larger) version of this.
Indeed, for each $\mu>0$ (however small) consider a smooth function $\ga:\R\to [0,\infty)$ such that $\ga(x)=0$ for $x\leq -\mu$, $\ga(x)=x$ for $x\geq \mu$, and $\ga''\geq 0$, as in Figure \ref{fig2}.

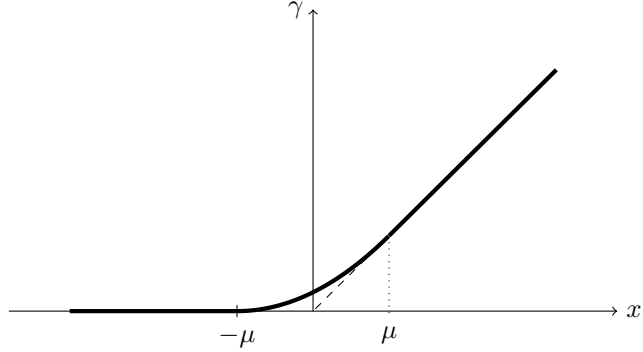
\begin{figure}
\centering
\begin{tikzpicture}[xscale=4,yscale=4]

\draw [<->] (0,1) node[left]{$\ga$} -- (0,0) -- (1,0) node[right]{$x$};
\draw  (0,0) -- (-1,0);

\draw[ultra thick] (-0.8,0) -- (-0.25,0);
\draw[ultra thick] (0.25,0.25) -- (0.8,0.8);
\draw[dashed] (0.25,0.25) -- (0,0);

\draw[ultra thick, domain=-0.25:0.25, samples=100] 
plot (\x, {
(\x+0.25) * (\x+0.25)
});

\draw (-0.25,0.02) -- (-0.25,-0.02) node[below]{$-\mu$};
\draw[dotted] (0.25,0.25) -- (0.25,-0.02) node[below]{$\mu$};

\end{tikzpicture}
\caption{Smoothing function $\ga$.}
\label{fig2}
\end{figure}

We can then consider instead the smooth function
\beq
\label{v0def}
v_0(\bx):=h(\bx)+\ga(u_0(\bx)-h(\bx)).
\eeq

Thus we have $v_0\geq h$, $v_0\geq u_0$, and
in some neighbourhood of $\partial B$, $v_0= h$.
Moreover, by taking $\mu$ sufficiently small (depending on $\delta$) we can be sure that 
$$\|v_0- h\|_{L^1(B)}\leq \|(u_0- h)_+\|_{L^1(B)}+\frac{\de}{100}\leq 1+\frac{\de}{100}.$$

According to \cite{GT2, ICRF_UNIQ} there exists a unique complete Ricci flow -- i.e. solution $v(t)$ to the logarithmic fast diffusion equation -- starting with $v_0$, and existing for all time $t\geq 0$. This flow will have bounded curvature, not just initially, but for all time, because $v_0\leq Ch$ for some large $C$ (see \cite{GT2}, in contrast to \cite{GT4}).
Moreover, that solution will be maximally stretched \cite{GT2} and in particular, we will have 
$$u(t)\leq v(t)\quad\text{ for all }t\in [0,T),\text{ and }\qquad (2t+1)h\leq v(t)\quad\text{ for all }t\in [0,\infty).$$

We see then that we are reduced to proving the following proposition, which will be the objective of the remainder of Section \ref{proof_of_main_theorem_sect}.

\begin{prop}
\label{key_prop2}
Suppose $v_0:B\to (0,\infty)$ is smooth, with $v_0\geq h$ and with equality outside a compact set in $B$. Suppose further that for some $\de\in (0,1]$ we have
\beq
\label{v0hyp}
\|v_0- h\|_{L^1(B)}\leq 1+\frac{\de}{100}.
\eeq
If $v:B\times [0,\infty)\to (0,\infty)$ is the unique complete solution
to the equation $\pt v=\lap\log v$ with initial data $v_0:=v(0)$ (see \cite{ICRF_UNIQ, GT2}), then
\beq
\label{main_prop2_assertion}
v(t_0)\leq C\qquad\text{ at the origin, at time }
t_0= \frac{1}{4\pi}(1+\de),
\eeq
where $C$ depends only on $\de$.
\end{prop}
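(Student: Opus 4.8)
The plan is to work with the potential $\varphi$ on $B$ defined (up to an additive constant) by $\Delta\varphi = v - h$, so that $\varphi$ captures exactly the excess area of $v$ over the hyperbolic background. Since $v_0 - h$ is a smooth, compactly supported function on $B$ with $\|v_0-h\|_{L^1}\leq 1+\tfrac{\delta}{100}$, we can choose the initial potential $\varphi_0$ to be the Newtonian-type potential of $v_0-h$, and then let $\varphi(t)$ evolve so that $\Delta\varphi(t) = v(t)-h$ for all $t$; concretely one expects $\partial_t\varphi = \log\frac{v}{h}$ modulo harmonic corrections, using that $\partial_t v = \Delta\log v$ and $0 = \Delta\log h + h$ (the latter being \eqref{Kh} rewritten). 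First I would pin down the precise evolution equation for $\varphi$ and check that $\varphi$ stays bounded above (or controlled) globally: the key monotonicity should be that $\partial_t\varphi = \log\frac{v}{h}$ up to terms that are harmonic and hence, given the boundary behaviour $v=h$ near $\partial B$, actually zero or sign-definite. This is the step where the K\"ahler-geometry analogy with \cite{Guedj} enters: there $\varphi$ plays the role of the K\"ahler potential and $\log\frac{v}{h}$ is the analogue of the Monge--Amp\`ere density.

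The second ingredient is an $L^1$-in-space bound on $\varphi(t)$, or rather on $(\varphi(t))_+$ or some exponential thereof, coming from the hypothesis $\|v_0-h\|_{L^1(B)}\leq 1+\tfrac{\delta}{100}$. Because $\Delta\varphi = v-h\geq 0$, the function $\varphi$ is subharmonic, so $\sup$-bounds for $\varphi$ follow from $L^1$ bounds via the mean value / sub-mean-value property, with the total mass $\int_B(v-h) = \int_B(v_0-h) \leq 1+\tfrac{\delta}{100}$ conserved in time (here one uses crucially that $v-h$ has no flux through $\partial B$, i.e.\ $v=h$ near the boundary is preserved). The constant $4\pi$ in $t_0 = \frac{1}{4\pi}(1+\delta)$ should emerge precisely here: the logarithmic kernel $\frac{1}{2\pi}\log|\cdot|$ against a mass of roughly $1$ produces, after exponentiation and accounting for the factor coming from $\partial_t\varphi = \log\frac vh$ integrated over a time interval of length $t_0$, a borderline-integrable singularity exactly when $t_0\cdot(\text{const})$ crosses the threshold $\frac{1}{4\pi}$ times the mass. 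I would make this quantitative through a Moser--Trudinger / Jensen-type inequality: $\int_B e^{c\,\varphi}\,dx_{\text{hyp}}$ is finite with a bound depending only on the mass and $c$, as long as $c$ stays below the critical constant — and the hypothesis mass $\leq 1+\tfrac{\delta}{100}$ together with waiting time $1+\delta$ keeps us strictly subcritical, with the gap governed by $\delta$.

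From there the endgame is to convert a bound on $\int e^{c\varphi}$ at time near $t_0$, together with the evolution $\partial_t\varphi = \log\frac{v}{h}$, into a pointwise bound on $v(t_0)$ at the origin. The mechanism: $\log\frac{v(t_0,0)}{h(0)} = \partial_t\varphi(t_0,0)$ is controlled by a difference quotient of $\varphi$ in time, and by convexity-type properties of $t\mapsto\varphi(t,x)$ (one expects $\varphi$ to be essentially convex, or at least that $\partial_t\varphi$ is monotone, since $\partial_t^2\varphi = \partial_t\log\frac vh = \frac{\Delta\log v}{v}\cdot$ something, related to $-2K$), we can bound $\partial_t\varphi(t_0,0)$ by $\frac{1}{t_0-s}\int_s^{t_0}\partial_t\varphi\,dt = \frac{\varphi(t_0,0)-\varphi(s,0)}{t_0-s}$ for suitable $s<t_0$, and then $\varphi(t_0,0)$ is bounded above by the subharmonic mean-value estimate plus the $\int e^{c\varphi}$ control, while $\varphi(s,0)$ is bounded below by the ODE $\partial_t\varphi \geq \log\frac{(2t+1)h}{h} = \log(2t+1)$ coming from the lower barrier $v\geq(2t+1)h$. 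Combining these gives $v(t_0,0)\leq C(\delta)\,h(0)$.

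The main obstacle, I expect, is the global control of the potential $\varphi$ on a \emph{noncompact} complete surface — unlike the compact K\"ahler setting of \cite{Guedj}, here $(B,v(t)(dx^2+dy^2))$ is complete and noncompact (the hyperbolic-type end), so one must justify that $\varphi$ is well-defined with the right behaviour near $\partial B$, that no mass escapes to infinity (conservation of $\int_B(v-h)$), and that the subharmonic mean-value and Moser--Trudinger estimates survive with constants depending only on $\delta$ and not on the (possibly wild) behaviour of $v_0$ near the boundary. Making the boundary/infinity bookkeeping rigorous — ensuring $v=h$ near $\partial B$ propagates forward, or controlling the error if it only does so approximately — together with identifying the sharp critical constant so that the threshold is exactly $\frac{1}{4\pi}$, is where the real work lies; the interior PDE estimates are comparatively routine once the potential is set up correctly.
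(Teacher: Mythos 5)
Your overall architecture matches the paper's: a potential with $\lap\psi = v - (\text{hyperbolic reference})$, exponential integrability of the Newtonian potential of a mass just below $4\pi t$ (this is exactly the Brezis--Merle theorem, which is where the paper gets the constant $4\pi$), and a conversion from potential bounds to pointwise bounds on $v$. But the conversion step, which is the heart of the proof, has a genuine gap. You propose to bound $\log\frac{v(t_0,0)}{h(0)}=\pt\vph(t_0,0)$ by a backward difference quotient $\frac{\vph(t_0,0)-\vph(s,0)}{t_0-s}$ using "convexity-type properties" of $t\mapsto\vph(t,x)$. This fails twice over. First, $\pt^2\vph=\pt\log\frac{v}{h}=\frac{\lap\log v}{v}=-2K$ has no sign in general, so neither convexity nor the concavity you would actually need holds pointwise. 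The paper's substitute is the differential Harnack quantity $\h=t\log\lap\vph-[\vph(t)-\vph(0)]$, which satisfies $\pt\h\leq\lap_v\h$ (note: the Laplacian of the \emph{evolving complete metric}, not the flat one) and hence $\h\leq 0$ by a maximum principle on the complete noncompact flow --- this is a global parabolic argument, not a pointwise ODE in $t$. Second, even granting a difference-quotient bound, your lower bound on $\vph(s,0)$ reduces to a lower bound on $\vph(0,0)$, i.e.\ on the Newtonian potential of $v_0-h$ at the origin, which is \emph{not} uniform in the data: it degenerates to $-\infty$ as the unit mass concentrates at a point (precisely the cigar example showing sharpness). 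The only thing that survives concentration is integrability of $e^{-p\psi(0)/t}$ for $t$ above the threshold, which is why the argument must run through Brezis--Merle rather than pointwise bounds on the initial potential.

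Two further points. The Harnack inequality plus Brezis--Merle yields only $\norm{v(\tilde t)/h}_{L^p}\leq C(\de)$ for some $p>1$ at an intermediate time $\tilde t<t_0$, because $e^{-\psi(0)/\tilde t}$ is in $L^p$ but not $L^\infty$; the paper then needs a second iteration --- restart the flow at $\tilde t$, build a new potential whose Laplacian is now in $L^p$, and use Calderon--Zygmund/$W^{2,p}\embed L^\infty$ to get $\|\tilde\psi(0)\|_{L^\infty}\leq C$ and hence the pointwise bound at $t_0$. Your proposal does not contain this bootstrap, and the sub-mean-value estimate for the subharmonic $\vph(t_0)$ bounds the potential, not $v$ itself. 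Finally, a smaller but real inconsistency: with your normalization $\lap\vph=v-h$ one cannot also have $\pt\vph=\log\frac{v}{h}$, since $\pt\lap\vph=\lap\log v$ while $\lap\pt\vph=\lap\log v-2h$, and $2h$ is not harmonic; the paper resolves this by taking $\lap\psi(t)=v(t)-(2t+1)h$ with $\pt\psi=\log v-\log h-\log(2t+1)$, and it is this time-dependent reference $(2t+1)h$ (the maximally stretched lower barrier) that makes the bookkeeping close.
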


\subsection{The potential function and the Differential Harnack estimate}

To prove Proposition \ref{key_prop2}, we consider a potential function that will be constructed using the following lemma, which will be proved in Section \ref{proofs_of_lemmas}.

\begin{lemma}
\label{lem:psi}
Suppose $v_0:B\to (0,\infty)$ is smooth, with $v_0\geq h$ and with equality outside a compact set in $B$, and let $v:B\times [0,\infty)\to (0,\infty)$ 
be the unique complete solution
to the equation $\pt v=\lap\log v$ with $v(0)=v_0$.
Then there exists $\psi\in C^\infty(B\times [0,\infty))\intersect C^0(\overline B\times [0,\infty))$ such that for all $t\geq 0$, we have
	\begin{equation}
	\left\{
		\begin{array}[]{l}
			\triangle \psi(t) = v(t) - (2t+1) h \\
			\psi(t)|_{\partial B}=0
		\end{array}
		\right.
		\label{eqn:psi1}
	\end{equation}
	and
	\begin{equation}
		\label{eqn:psi2}
		\pt\psi= \log v - \log h -\log (2t+1).
	\end{equation}
\end{lemma}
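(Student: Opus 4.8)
The plan is to build $\psi$ by an explicit formula and then reduce everything to one boundary‑decay estimate. Let $\psi_0\in C^\infty(B)\cap C^0(\overline B)$ solve the Dirichlet problem $\triangle\psi_0=v_0-h$ on $B$ with $\psi_0|_{\partial B}=0$ (well posed, since $v_0-h$ is smooth, bounded and compactly supported), write $f(\bx,t):=\log v(\bx,t)-\log h(\bx)-\log(2t+1)$, and set
$$\psi(\bx,t):=\psi_0(\bx)+\int_0^t f(\bx,s)\,ds.$$
Then \eqref{eqn:psi2} holds by construction. For \eqref{eqn:psi1} one differentiates twice in space under the integral (legitimate because $v$, hence $f$ and its spatial derivatives, is smooth on $B\times[0,\infty)$): using $\triangle\log h=2h$ from \eqref{Kh} and $\partial_s v=\triangle\log v$, the integrand becomes $\triangle f(s)=\triangle\log v(s)-2h=\partial_s v(s)-2h$, so the integral telescopes and
$$\triangle\psi(t)=(v_0-h)+\int_0^t\big(\partial_s v(s)-2h\big)\,ds=v(t)-(2t+1)h,$$
as required; the boundary condition $\psi(t)|_{\partial B}=0$ will follow once we know the integral term vanishes on $\partial B$.

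Next I would dispose of the regularity claims, reducing them to a single estimate. Since $v$ is a smooth positive solution on $B\times[0,\infty)$ (with $v_0$ smooth, the flow is smooth up to $t=0$), $f\in C^\infty(B\times[0,\infty))$, hence $\psi\in C^\infty(B\times[0,\infty))$. Moreover $f\ge0$ because $v(t)\ge(2t+1)h$, and $f$ is bounded on $B\times[0,T]$ because $v(t)\le(2t+C)h$ for any constant $C\ge1$ with $v_0\le Ch$ — such $C$ exists since $v_0/h$ is continuous on $\overline B$ and equals $1$ near $\partial B$, and the upper bound on $v(t)$ follows by comparison with the (maximally stretched) flow $(2t+C)h$ starting from $Ch$, cf.\ \cite{GT2, ICRF_UNIQ}. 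With these bounds $\psi_0\in C^0(\overline B)$ and the integral term is continuous on $B\times[0,\infty)$ and bounded on $B\times[0,T]$, so the whole lemma is reduced to
\begin{equation}\label{decay_star}
f(\bx,t)\to0\qquad\text{as }\bx\to\partial B,\text{ uniformly for }t\in[0,T],
\end{equation}
for each $T<\infty$ (which also yields continuity of $\psi$ up to $\overline B$ and $\psi(t)|_{\partial B}=0$).

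To prove \eqref{decay_star} I would run a barrier argument on the complete hyperbolic disc $(B,h)$. A direct computation using $\triangle\log h=2h$ and $v\ge(2t+1)h$ gives
$$\partial_t f=\frac1v\triangle f+2h\Big(\frac1v-\frac1{(2t+1)h}\Big)\le\frac1v\triangle f=\frac hv\,\triangle_h f,$$
where $\triangle_h=\frac1h\triangle$ and the coefficient $h/v\in[\tfrac1{2t+C},\tfrac1{2t+1}]$ is bounded between positive constants on $[0,T]$. Let $r_h(\bx)$ denote hyperbolic distance from the origin and put $g(\bx):=A\big(-\log\tanh(r_h(\bx)/2)\big)$. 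The profile $-\log\tanh(r/2)$ is hyperbolic‑harmonic for $r>0$, nonnegative, tends to $+\infty$ at $r=0$ and to $0$ as $r\to\infty$, so $g$ is a stationary supersolution of $\partial_t-\tfrac hv\triangle_h$ on $B\setminus\{0\}$ that vanishes at $\partial B$. Choose $A$ large enough that $g\ge f(\cdot,0)=\log(v_0/h)$ on $B$, which is possible because $f(\cdot,0)$ is bounded and supported in a hyperbolic ball of finite radius. On $(B\setminus B_\e)\times[0,T]$, with $B_\e$ the hyperbolic $\e$‑ball, the subsolution $f-g$ is $\le0$ on the parabolic boundary — on $\{t=0\}$ by the choice of $A$, and on $\partial B_\e$ for $\e$ small since there $g\to\infty$ while $f$ stays bounded — and is bounded above; so the maximum principle for bounded subsolutions of uniformly parabolic equations on the complete manifold $B\setminus B_\e$ gives $f\le g$ there. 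Letting $\e\downto0$ and using $f\ge0$, we obtain $0\le f(\bx,t)\le g(\bx)\to0$ as $\bx\to\partial B$, uniformly in $t\in[0,T]$, which is \eqref{decay_star}.

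The main obstacle is \eqref{decay_star}, and within it the use of a maximum principle on the noncompact hyperbolic disc: one must justify the comparison across the excised puncture at the origin and at the ideal boundary, and here the boundedness of $f$ together with the uniform comparability of $v$ — hence of the coefficient $h/v$ — to the complete hyperbolic metric is exactly what makes the argument go through. Everything else (solvability of the Dirichlet problem for $\psi_0$, differentiation under the integral sign, the telescoping identity for $\triangle\psi$, and the interior smoothness of $\psi$) is routine.
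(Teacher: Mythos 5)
Your construction of $\psi$ --- solving the Dirichlet problem only at $t=0$, then integrating \eqref{eqn:psi2} in time and recovering \eqref{eqn:psi1} by commuting $\triangle$ with $\partial_t$ so that the integral telescopes --- is exactly the paper's argument, and your verification of those steps is correct. Where you genuinely diverge is in the boundary-decay input, i.e.\ the statement that $\log\frac{v(t)}{(2t+1)h}\to 0$ at $\partial B$ uniformly in time, which the paper isolates as Lemma \ref{lem:vboundary} (and proves uniformly over all $t\in[0,\infty)$, not just $t\in[0,T]$). The paper's route is purely comparison-theoretic: on an annulus $B\setminus\overline{B_a}$ outside which $v_0=h$, the flow $(2t+1)h_a$ of the annulus's own complete hyperbolic metric is maximally stretched, so $(2t+1)h\le v(t)\le(2t+1)h_a$ there and $h_a/h\to1$ at $\partial B$; no parabolic maximum principle on a noncompact domain is needed beyond the uniqueness/maximal-stretching results already being quoted. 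Your route instead derives $\partial_t f\le\frac1v\triangle f$ for $f=\log v-\log h-\log(2t+1)$ and compares $f$ with the hyperbolic Green's function $A\bigl(-\log\tanh(r_h/2)\bigr)$; the computations are correct (the barrier is indeed hyperbolic-harmonic, and the two-sided bound $(2t+1)h\le v\le(2t+C)h$ you use to bound $f$ and the coefficient $h/v$ is justified by the same maximal-stretching comparison the paper uses). The cost is that you must invoke a maximum principle for bounded subsolutions of a uniformly parabolic equation on the complete-at-infinity manifold $B\setminus B_\varepsilon$ \emph{with boundary}, a mild but genuine extension of the paper's Theorem \ref{thm:maximum}; that is the one step you would need to state and justify carefully. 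The paper's annulus comparison is shorter, yields the sharper multiplicative statement $v/((2t+1)h)\to1$, and avoids any restriction to finite time intervals, but your barrier argument is a valid alternative.
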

We can then define the potential function
\begin{equation}\label{eqn:varphi}
\varphi=\psi+ (t+1/2) \log [(2t+1)h].	
\end{equation}
It is straightforward to check, using Lemma \ref{lem:psi} and \eqref{Kh}, that
\begin{equation}\label{eqn:laplacevarphi}
\triangle \varphi = 
v,	
\end{equation}
and
\begin{equation}
	\partial_t \varphi = \log \triangle \varphi + 1.
	\label{eqn:equationvarphi}
\end{equation}
Following \cite{Guedj}, we define the Harnack quantity $\h:B\times [0,T)\to\R$ to be
\begin{align}
\label{H_form1}
\h&:=t\log\lap\vph-\left[\vph(t)-\vph(0)\right]\\
\label{H_form2}
&= t \pt\varphi-t -\left[\varphi(t)-\varphi(0)\right].
\end{align}
\begin{lemma}
\label{harnack_lemma}
Writing $\lap_v:=\frac{1}v\lap$ for the Laplacian with respect to the metric corresponding to $v$, we have
\begin{equation}
\label{Hheateq}
\partial_t \h \leq \triangle_v \h,
\end{equation}
and 
\beq
\h\leq 0
\eeq
throughout $B\times [0,\infty)$.
\end{lemma}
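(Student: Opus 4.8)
The plan is to prove the two assertions of Lemma~\ref{harnack_lemma} in turn: first the differential inequality $\pt\h\le\lap_v\h$ via a direct computation using the evolution equation \eqref{eqn:equationvarphi}, and then the sign $\h\le0$ by combining a maximum-principle argument at the parabolic boundary with the differential inequality.

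\textbf{Step 1: the evolution inequality.} Starting from the form \eqref{H_form2}, $\h=t\pt\vph-t-[\vph(t)-\vph(0)]$, I would differentiate in $t$ to get $\pt\h=\pt\vph+t\,\pt^2\vph-1-\pt\vph=t\,\pt^2\vph-1$. Differentiating \eqref{eqn:equationvarphi} in time gives $\pt^2\vph=\frac{\pt\lap\vph}{\lap\vph}=\frac{\lap\pt\vph}{v}=\lap_v(\pt\vph)$, using $\lap\vph=v$ and that $\lap$ commutes with $\pt$. Hence $\pt\h=t\,\lap_v(\pt\vph)-1$. On the other side, from \eqref{H_form1}, $\h=t\log\lap\vph-[\vph(t)-\vph(0)]$, so $\lap_v\h=\frac1v\lap\bigl(t\log v-\vph(t)+\vph(0)\bigr)=\frac{t}{v}\lap\log v-\frac1v\lap\vph(t)+\frac1v\lap\vph(0)$. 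Now $\lap\vph(t)=v(t)$ and $\lap\vph(0)=v_0$, while $\frac1v\lap\log v=\frac1v\pt v=\pt\log v=\pt\pt\vph+$ (constant terms), more precisely $\lap\log v=\pt v$ so $\frac1v\lap\log v = \pt\log v$. Thus $\lap_v\h = t\,\pt\log v - 1 + \frac{v_0}{v}$. Comparing with $\pt\h = t\,\lap_v(\pt\vph)-1$ and noting $\lap_v(\pt\vph)=\lap_v(\log\lap\vph+1)=\lap_v\log v=\pt\log v$, we get $\pt\h = t\,\pt\log v - 1$, so $\lap_v\h-\pt\h = \frac{v_0}{v}\ge 0$ since $v>0$ and $v_0>0$. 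This in fact gives equality up to the nonnegative term $v_0/v$, which is exactly the claimed inequality \eqref{Hheateq} (and slightly more).

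\textbf{Step 2: the sign.} With \eqref{Hheateq} in hand, I would apply the maximum principle on $B\times[0,\infty)$, being careful that the domain is noncompact in the sense that $v$ need not be bounded near $\partial B$ and $\psi$ is only $C^0$ up to the boundary. At $t=0$ we have $\h(0)=0\cdot\log\lap\vph-0=0$. On the spatial boundary $\partial B$, since $\psi(t)|_{\partial B}=0$ by \eqref{eqn:psi1}, the potential reduces to $\vph(t)|_{\partial B}=(t+\tfrac12)\log[(2t+1)h]$, so $\vph(t)-\vph(0)\big|_{\partial B}=(t+\tfrac12)\log[(2t+1)h]-\tfrac12\log h = (t+\tfrac12)\log(2t+1)+t\log h$; meanwhile $t\log\lap\vph|_{\partial B}=t\log v|_{\partial B}=t\log h$ since $v=h$ near $\partial B$, giving $\h(t)|_{\partial B}=-(t+\tfrac12)\log(2t+1)\le 0$. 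Since the parabolic boundary data are $\le0$ and $\h$ satisfies a (linear, uniformly parabolic on compact subsets) subsolution inequality with respect to the metric $v\,|dx|^2$, which is complete by hypothesis, the conclusion $\h\le0$ follows from the maximum principle for complete manifolds — provided one has suitable control on the growth of $\h$.

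\textbf{Main obstacle.} The delicate point is \emph{not} the algebra of Step~1 but the applicability of the maximum principle in Step~2: the flow $v(t)$ is a complete Ricci flow of bounded curvature, so one wants to invoke a maximum principle (in the spirit of the ones used in the references \cite{GT2, ICRF_UNIQ} or classical results for complete manifolds with Ricci bounded below) to deduce $\h\le0$ on the open ball from the boundary behaviour, rather than treating $B$ as a compact manifold with boundary. One must check that $\h$ does not grow too fast as one approaches $\partial B$ — near $\partial B$ we have $v=h$ exactly so $\psi(t)$ stays bounded (it is $C^0$ up to $\partial B$ and vanishes there) and $\h(t)|_{\partial B}$ is finite, which suggests the needed control is available, but making this rigorous (e.g.\ via a barrier argument localising near the boundary, or via the completeness of $v$) is where the real work lies. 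Following \cite{Guedj}, where an analogous Harnack quantity is controlled in the compact K\"ahler setting, the argument here must additionally accommodate the local/noncompact nature of the problem, and that adaptation is the crux.
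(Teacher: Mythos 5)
Your Step 1 is exactly the paper's computation: differentiating the two forms \eqref{H_form1} and \eqref{H_form2} of $\h$, using $\lap\vph=v$ and \eqref{eqn:equationvarphi}, yields $\pt\h-\lap_v\h=-v_0/v\le 0$. That part is complete and correct.

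The gap is in Step 2, and it is twofold. First, your boundary computation rests on the claim that $v=h$ near $\partial B$; this holds only at $t=0$. For $t>0$ the solution satisfies $v(t)\ge (2t+1)h$ everywhere, and what one actually needs is the asymptotic statement that $\frac{v(t)}{(2t+1)h}\to 1$ uniformly as $\bx\to\partial B$. The paper proves this as a separate lemma (Lemma \ref{lem:vboundary}) by sandwiching $v(t)$ between the maximally stretched complete flows $(2t+1)h$ on $B$ and $(2t+1)h_a$ on an annulus $B\setminus\overline{B_a}$ outside of which $v_0=h$. Granting it, the correct boundary value of $\h$ is $-\tfrac12\log(2t+1)$, not $-(t+\tfrac12)\log(2t+1)$: the term $t\log\lap\vph$ tends to $t\log[(2t+1)h]$ at $\partial B$, not to $t\log h$. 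Both values are $\le 0$, so the sign survives, but the justification you give is not valid as written. Second, and more importantly, you leave the maximum principle itself as ``where the real work lies,'' whereas this is precisely what the lemma requires. The paper's resolution is that no boundary condition and no barrier are needed at all: since $g(t)=v(t)\,|d\bx|^2$ is complete with bounded curvature (hence Ricci bounded below and $|\pt g|$ controlled), one invokes a maximum principle for complete manifolds (Theorem \ref{thm:maximum}, taken from \cite{chowIIanalytic}) which requires only that $\h$ be bounded above on $B\times[0,T]$ and that $\h(0)\le 0$. The upper bound on $\h$ is exactly what the uniform boundary asymptotics of Lemma \ref{lem:vboundary} deliver, via the rewriting \eqref{H_in_terms_of_v} together with the continuity of $\psi$ up to $\overline B$. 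So the single missing ingredient in your argument is the boundary asymptotics of $v$; once those are established, the rest follows directly from the cited maximum principle.
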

\brmk
\label{HMPrmk}
Clearly $\h(0)\equiv 0$, so the proof of this lemma, which we give in Section \ref{proofs_of_lemmas}, will involve verifying the equation for $\h$ and then checking that the maximum principle applies.
Of course, one must take care about how $\h$ behaves as we approach $\partial B$, but rewriting $\h$ in terms of $\psi$ and $v$ rather than $\vph$,
using \eqref{eqn:laplacevarphi} and \eqref{eqn:varphi}
 gives
\beq
\label{H_in_terms_of_v}
\h=t\log\left[\frac{v}{(2t+1)h}\right]-\left[\psi(t)-\psi(0)\right]
+\frac{1}{2}\log\left[\frac{1}{2t+1}\right],
\eeq
and we will show in Lemma \ref{lem:vboundary} that 
$$\frac{v}{(2t+1)h}(\bx)\to 1\qquad\text{ as }\bx\to\partial B,$$
so $\h$ extends continuously to $\partial B$, where it takes the value
$$\h|_{\partial B}\equiv \frac{1}{2}\log\left[\frac{1}{2t+1}\right]\leq 0,$$
so it seems reasonable to hope that $\h\leq 0$ holds.
In fact, because the heat equation \eqref{Hheateq} satisfied by $\h$ involves the Laplacian $\lap_v$ rather than $\lap$, only a mild growth condition on the positive part of $\h$ is required to make the maximum principle work, and in particular, it will suffice that $\h$ is bounded above on $B\times [0,T]$, for each $T>0$, rather than nonpositive at the boundary.
\ermk
We end this section by noting the consequences of the Harnack Lemma \ref{harnack_lemma} for $v$. Only the first, simpler, estimate will be required, but it will be required twice.

\begin{cor}
\label{exp_cor}
In the setting of Lemma \ref{lem:psi},
for all $t>0$ we have
\begin{equation}
\label{weaker_H_assertion}
\frac{v(t)}{(2t+1)h}\leq \exp\left[1-\frac{\psi(0)}{t}\right]
\end{equation}
or more generally
\begin{equation}
\label{stronger_H_assertion}
\frac{v(t)}{(2t+1)h}\leq \left(1+2t\right)^{1/(2t)} 
\exp\left[\frac{\psi(t)-\psi(0)}{t}\right].
\end{equation}
\end{cor}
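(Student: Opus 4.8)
The plan is to read off both estimates directly from the Harnack Lemma \ref{harnack_lemma}, which asserts $\h\leq 0$ throughout $B\times[0,\infty)$. Using the form \eqref{H_form1}, namely $\h = t\log\lap\vph - [\vph(t)-\vph(0)]$, together with $\lap\vph=v$ from \eqref{eqn:laplacevarphi}, the inequality $\h\leq 0$ becomes $t\log v(t) \leq \vph(t)-\vph(0)$. So the first step is simply to exponentiate: for $t>0$,
\begin{equation}
v(t) \leq \exp\!\left[\frac{\vph(t)-\vph(0)}{t}\right].
\end{equation}

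Next I would substitute the definition \eqref{eqn:varphi} of $\vph$ in terms of $\psi$. We have
$\vph(t)-\vph(0) = \psi(t)-\psi(0) + (t+\tfrac12)\log[(2t+1)h] - \tfrac12\log h$,
and since $(t+\tfrac12)\log[(2t+1)h]-\tfrac12\log h = t\log h + (t+\tfrac12)\log(2t+1)$, dividing by $t$ gives
\begin{equation}
\frac{\vph(t)-\vph(0)}{t} = \frac{\psi(t)-\psi(0)}{t} + \log h + \frac{(2t+1)}{2t}\log(2t+1).
\end{equation}
Hence $v(t)\leq h\cdot(2t+1)^{(2t+1)/(2t)}\exp\!\big[\tfrac{\psi(t)-\psi(0)}{t}\big]$, and dividing both sides by $(2t+1)h$ yields exactly the general estimate \eqref{stronger_H_assertion}, since $(2t+1)^{(2t+1)/(2t)}/(2t+1) = (2t+1)^{1/(2t)} = (1+2t)^{1/(2t)}$.

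Finally, for the simpler estimate \eqref{weaker_H_assertion}, I would instead use the form \eqref{H_in_terms_of_v} of $\h$, or equivalently note that the differential Harnack inequality in the form $\partial_t\vph \geq$ (something) combined with $\h\leq 0$ and $\h(0)=0$ gives $t\mapsto \h(t)$ nonincreasing-like behaviour; more concretely, from \eqref{H_form2} one has $\h = t\pt\vph - t - [\vph(t)-\vph(0)]$, and since $\pt\vph = \log\lap\vph + 1$ is (by the Harnack estimate, cf. \cite{Guedj}) monotone in the appropriate sense, the function $t\mapsto \psi(t)-\psi(0)$ grows at a controlled rate; but the cleanest route is just to observe that $\h\leq 0$ applied together with the fact that $\h/t$ is monotone — or simply that $\psi(t)\geq\psi(0)$ need not hold, so instead one uses that $\partial_t\big(\h/t\big)\leq 0$ is false in general and one should instead bound $\psi(t)-\psi(0)$ above. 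The honest short argument: since $\pt\vph=\log\lap\vph+1\geq$ its value as $t\to 0^+$ only after using concavity, I would rather derive \eqref{weaker_H_assertion} by noting that the Harnack quantity $\h(t)\le 0$ can be divided by $t$ and that, by the differential Harnack inequality, $t^{-1}\h(t)$ is nonincreasing; evaluating the limit as $t\downarrow 0$ gives $t^{-1}\h(t)\le \lim_{s\downarrow 0}s^{-1}\h(s)=\pt\vph(0)-1-$\dots. Concretely, $t\log v(t) = \h + \vph(t)-\vph(0) \le \vph(t)-\vph(0)$, and separately $\vph(t)-\vph(0)\le t\,\pt\vph(0) = t(\log\lap\vph(0)+1) = t(\log v_0 + 1)$ would need convexity of $\vph$ in $t$. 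Since that is not available, the genuinely robust derivation of \eqref{weaker_H_assertion} is: rewrite $\h\le0$ via \eqref{H_in_terms_of_v}, drop the nonpositive term $\tfrac12\log\tfrac1{2t+1}$, and bound $-(\psi(t)-\psi(0))\le -\psi(t)+\psi(0)$; since $\psi(t)$ satisfies $\lap\psi(t)=v(t)-(2t+1)h\ge -(2t+1)h$ with zero boundary data, the maximum principle gives $\psi(t)\ge \psi_{-}(t)$ for the solution with $\lap\psi_- = -(2t+1)h$, but the simplest valid statement is just $\psi(t)\ge$ a function whose infimum is controlled — and in fact the paper only needs $-\psi(t)\le 0$ at the point of application (the origin), which is \emph{not} automatic. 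Given these subtleties, the main obstacle is precisely this: establishing \eqref{weaker_H_assertion} requires either the monotonicity of $t^{-1}\h$ (a standard consequence of the Harnack differential inequality, which I would prove by computing $\partial_t(t^{-1}\h)$ and applying the maximum principle, using $\h(0)\equiv 0$ so that $\lim_{t\downarrow 0} t^{-1}\h(t) = \pt\h(0)-\lim t^{-1}\h=\dots$ is finite and computable, giving the constant $1-\psi(0)/t$ after noting $\pt\vph(0)=\log v_0+1$ and $\vph(0)=\psi(0)+\tfrac12\log h$), or an independent sign analysis of $\psi$. I expect the clean argument to be: show $\partial_t(\h/t)\le \lap_v(\h/t) + (\text{lower order})$ or directly that $\h(t)/t \le \h(s)/s$ for $s\le t$ fails, so instead use that $\vph(t)-\vph(0) = \int_0^t \pt\vph\,ds = \int_0^t(\log v(s)+1)\,ds$ and that $s\mapsto\log v(s)$ combined with $\h\le 0$ gives a Gronwall-type bound; dividing $t\log v(t)\le \int_0^t(\log v+1)$ and comparing with the ODE $tf' = \int_0^t f$ pins down $v(t)\le e\cdot v_0^{?}$, which after tracking $v_0 = \lap\vph(0) = \exp(\pt\vph(0)-1)$ and $\pt\vph(0) = \log v_0 +1$, i.e. using $\vph(0)=\psi(0)+\tfrac12\log h$ so that $\psi(0)$ enters, yields \eqref{weaker_H_assertion}. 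In summary: \eqref{stronger_H_assertion} is immediate from $\h\le0$ and the definitions; \eqref{weaker_H_assertion} additionally needs the monotonicity $\partial_t(\h/t)\le 0$ (equivalently $t\log\lap\vph$ convexity-type control), which follows from the differential Harnack inequality \eqref{Hheateq} by a second maximum-principle argument, and that is the step requiring the most care.
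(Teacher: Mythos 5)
Your derivation of the general estimate \eqref{stronger_H_assertion} is correct and is exactly the paper's argument: it is the rearrangement of $\h\le 0$ using the identity \eqref{H_in_terms_of_v} (equivalently, your direct substitution of \eqref{eqn:varphi} into \eqref{H_form1}).

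The gap is in your passage to the weaker estimate \eqref{weaker_H_assertion}, which in the paper is a two-line observation and needs none of the machinery you propose. First, the prefactor: from $\h\le0$ you get
$t\log\frac{v}{(2t+1)h}\le \psi(t)-\psi(0)+\frac12\log(2t+1)$, and $\frac1{2t}\log(2t+1)=\log(1+2t)^{1/(2t)}\le 1$ because $(1+1/x)^x\le e$; this is where the ``$1$'' in $\exp[1-\psi(0)/t]$ comes from. Note that your suggestion to ``drop the nonpositive term $\frac12\log\frac1{2t+1}$'' is logically backwards: from $A+B\le0$ with $B\le0$ you cannot conclude $A\le0$; you must move $B$ across, and it is precisely $-B$ that produces the factor $e$. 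Second, the term $\psi(t)$: you assert that $\psi(t)\le0$ is ``not automatic'' and then reach for an unproven (and unnecessary) monotonicity $\partial_t(\h/t)\le0$, Gronwall arguments, etc. In fact $\psi(t)\le0$ \emph{is} automatic: $\lap\psi(t)=v(t)-(2t+1)h\ge0$ because $v$ is the maximally stretched (complete) solution, so $v(t)\ge(2t+1)h$ as recorded in Section \ref{reduction_sect}; thus $\psi(t)$ is subharmonic with $\psi(t)|_{\partial B}=0$, and the maximum principle gives $\psi(t)\le0$, hence $\exp[\psi(t)/t]\le1$. Combining these two facts with \eqref{stronger_H_assertion} yields \eqref{weaker_H_assertion} immediately. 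As written, your proof of \eqref{weaker_H_assertion} is incomplete, since the monotonicity of $t^{-1}\h$ on which your summary rests is never established and is not the intended route.
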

The stronger statement \eqref{stronger_H_assertion} is nothing more than a rearrangement of \eqref{H_in_terms_of_v} and the inequality $\h\leq 0$. The weaker statement \eqref{weaker_H_assertion} then follows by recalling
that $(1+1/x)^x\leq e$ for all $x>0$, and noting that the equation \eqref{eqn:psi1} for $\psi$ implies $\lap\psi\geq 0$ on $B$, with $\psi=0$ on $\partial B$, so the maximum principle implies that $\psi(t)\leq 0$.

\subsection{Exponential Integrability}
Continuing with the proof of Proposition \ref{key_prop2}, recall that
\begin{equation*}
	\triangle \psi(0) =v_0 - h 
	\geq(u_0- h)_+
	\geq 0,
\end{equation*}
by \eqref{v0def}.
We use the following theorem of Brezis and Merle \cite{Brezis}.
\begin{thm}
\label{brezis_merle}
Suppose $\eta\in W_0^{1,1}(B)$ is a weak solution of
	\begin{equation*}
		\left\{
			\begin{array}[]{ll}
				\triangle \eta =f\in L^1 & \quad \mbox{on} \quad B; \\
				\eta=0 & \quad \mbox{on} \quad \partial B.	
			\end{array}
		\right.
	\end{equation*}
	Then for $0<p< 4\pi/\norm{f}_1$,  we have
	\begin{equation*}
		\int_B e^{p\abs{\eta}} \leq 16\pi^2 (4\pi - p \norm{f}_{1})^{-1}.
	\end{equation*}
\end{thm}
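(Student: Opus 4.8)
The target statement is the Brezis–Merle theorem itself, so the task is to sketch a proof of that classical estimate.

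\medskip

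The plan is to follow the original Brezis–Merle argument, which combines the explicit Green's representation on the disc with Jensen's inequality. First I would write $\eta$ via the Green's function for the Dirichlet Laplacian on $B$: since $\triangle\eta=f$ with $\eta=0$ on $\partial B$, we have
\[
\eta(x)=-\int_B G(x,y)\,f(y)\,dy,
\]
where $G(x,y)\ge 0$ is the Green's function, which on the unit disc satisfies the well-known bound $G(x,y)\le \frac{1}{2\pi}\log\frac{1}{|x-y|}+C_0$ for a universal constant $C_0$ (indeed one can even take $G(x,y)\le\frac{1}{2\pi}\log\frac{2}{|x-y|}$ on $B$ after a suitable normalisation). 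Hence
\[
|\eta(x)|\le \int_B G(x,y)\,|f(y)|\,dy \le \frac{1}{2\pi}\int_B \log\frac{C_1}{|x-y|}\,|f(y)|\,dy
\]
for a universal $C_1$ (e.g. $C_1=2$, so that $\log(C_1/|x-y|)\ge 0$ on $B$).

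\medskip

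The key step is then an application of Jensen's inequality to the probability measure $d\nu(y):=|f(y)|\,dy/\|f\|_1$. Write $\Lambda:=\|f\|_1$. For $p>0$ we estimate, using convexity of $\exp$,
\[
e^{p|\eta(x)|}\le \exp\!\left(\frac{p\Lambda}{2\pi}\int_B \log\frac{C_1}{|x-y|}\,d\nu(y)\right)
\le \int_B \left(\frac{C_1}{|x-y|}\right)^{\frac{p\Lambda}{2\pi}} d\nu(y).
\]
Integrating in $x$ over $B$ and applying Fubini gives
\[
\int_B e^{p|\eta(x)|}\,dx \le \int_B \left(\int_B \left(\frac{C_1}{|x-y|}\right)^{\frac{p\Lambda}{2\pi}} dx\right) d\nu(y)
\le \sup_{y\in B}\int_B \left(\frac{C_1}{|x-y|}\right)^{\frac{p\Lambda}{2\pi}} dx,
\]
since $\nu$ is a probability measure. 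The inner integral is finite precisely when the exponent $\frac{p\Lambda}{2\pi}<2$, i.e. when $p<\frac{4\pi}{\Lambda}$, which is exactly the hypothesis; moreover one can bound $\int_B|x-y|^{-s}\,dx\le \int_{|z|\le 2}|z|^{-s}\,dz = \frac{2\pi\,2^{2-s}}{2-s}$ for $0\le s<2$, and then track the constants to land on the stated bound $16\pi^2(4\pi-p\|f\|_1)^{-1}$ (with $s=\frac{p\Lambda}{2\pi}$, so $2-s=\frac{4\pi-p\Lambda}{2\pi}$, producing the factor $(4\pi-p\Lambda)^{-1}$ after the arithmetic, the remaining universal factors being absorbed by choosing the normalisation of $G$ and $C_1$ appropriately).

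\medskip

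The main technical points — and where care is needed — are: (i) justifying the Green's representation and the pointwise Green's function bound for merely $f\in L^1$ and $\eta\in W^{1,1}_0(B)$, which is standard elliptic theory but should be cited or sketched via approximation by smooth $f_k\to f$ in $L^1$, passing to the limit using Fatou on the left side; and (ii) getting the constants to match the stated form $16\pi^2(4\pi-p\|f\|_1)^{-1}$ exactly rather than just up to a universal factor. For the purposes of this paper, however, the precise constant is not essential — only the fact that the exponential integral is finite for $p<4\pi/\|f\|_1$ with a bound blowing up like $(4\pi-p\|f\|_1)^{-1}$ as $p\uparrow 4\pi/\|f\|_1$ — so in the write-up I would simply cite \cite{Brezis} for the sharp constant and indicate the Jensen-plus-Green's-function mechanism above. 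The genuinely delicate feature of Brezis–Merle, namely the sharpness of the constant $4\pi$ in the exponent, is already visible here: it is dictated by the local singularity $|x-y|^{-2}$ being the borderline non-integrable power in dimension $2$.
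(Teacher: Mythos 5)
The paper offers no proof of this statement at all: it is quoted directly from Brezis--Merle \cite{Brezis} (their Theorem 1, specialised to the unit disc, so that $\operatorname{diam}B=2$ produces the constant $16\pi^2$), so there is no internal argument to compare yours against. Your sketch is precisely the original Brezis--Merle proof --- Green's representation, the bound $G(x,y)\le\frac{1}{2\pi}\log\frac{\operatorname{diam}B}{|x-y|}$ (which follows from the maximum principle applied to the harmonic correction term), Jensen's inequality against the probability measure $|f(y)|\,dy/\norm{f}_1$, and Fubini --- and it is correct. One remark: you need not hedge about the constant. With $C_1=2=\operatorname{diam}B$ your own arithmetic closes exactly: for $s=p\norm{f}_1/(2\pi)<2$ and any $y\in B$ one has
\begin{equation*}
\int_B\Bigl(\frac{2}{|x-y|}\Bigr)^{s}dx\le 2^{s}\int_{|z|\le 2}|z|^{-s}\,dz=2^{s}\cdot 2\pi\cdot\frac{2^{2-s}}{2-s}=\frac{8\pi}{2-s}=\frac{16\pi^2}{4\pi-p\norm{f}_1},
\end{equation*}
which is the stated bound with no leftover universal factors. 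The only genuinely delicate point is the one you flag in (i): for $f\in L^1$ and $\eta\in W_0^{1,1}(B)$ the Green's representation requires uniqueness of very weak solutions (approximate $f$ in $L^1$, pass to the limit, and invoke the standard duality/Stampacchia uniqueness argument); this is exactly how \cite{Brezis} handles it, so citing that reference for the details, as you propose, is appropriate.
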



In particular, provided $\norm{f}_1 <4\pi$, we obtain $L^p$ control on $e^{|\eta|}$ for some $p>1$.
We would like to apply this with $\eta= \psi(0)/t$ and 
$f=\frac{v_0- h}{t}$, in which case hypothesis \eqref{v0hyp} of Proposition \ref{key_prop2} tells us that
\begin{equation*}
	\norm{f}_1 = \frac{\norm{v_0- h}_1}{t}\leq \frac{1}{t}\left(1+\frac{\de}{100}\right).
\end{equation*}
Therefore, if $t>\frac{1}{4\pi}(1+\frac{\de}{100})$ then we obtain 
$L^p$ control of the right-hand side of \eqref{weaker_H_assertion}, for 
$1<p<\frac{4\pi t}{1+\frac{\de}{100}}$.

To prove Proposition \ref{key_prop2}, we need to obtain pointwise estimates on $v(t)$ at time $t_0=\frac{1}{4\pi}(1+\de)$, but to do that, we first apply 
what we have just learned from Theorem \ref{brezis_merle} at time $\tilde t:=\frac{1}{4\pi}(1+\de/2)$.
This then gives us $L^p$ control in \eqref{weaker_H_assertion} 
for $1<p<\frac{4\pi \tilde t}{1+\de/100}=\frac{1+\de/2}{1+\de/100}$, and in particular
we can set $p=1+\de/3$, and conclude that
$$\int_B \exp\left[{-\frac{p\psi(0)}{\tilde t}}\right] \leq C(\de),$$
or by \eqref{weaker_H_assertion} of Corollary \ref{exp_cor},
\beq
\label{Lp_control}
\norm{\frac{v(\tilde t)}{(2\tilde t+1)h }}_{L^p(B)}\leq C(\de).
\eeq

We now wish to bootstrap our $L^p$ control to $L^\infty$ control. In order to do this, we view the Ricci flow as starting at time $\tilde t$, with initial data $v(\tilde t)$ controlled as above, and repeat the construction of the potential function, Harnack quantity, and subsequent application of Corollary \ref{exp_cor}, starting at this time. However, instead of using the Brezis-Merle Theorem \ref{brezis_merle}, we exploit our new $L^p$ control in order to apply classical Calderon-Zygmund estimates instead. When making that step, the unboundedness of $h$ near the boundary $\partial B$ would cause a problem; we avoid this by working only on the interior ball $B_{1/2}$ and comparing the flow with the hyperbolic metric $h_{1/2}$. Or equivalently, we make a rescaling of the domain coordinates so that the ball $B_{1/2}$ becomes a unit ball.

Following this second presentation, we define $\tilde u_0:B\to (0,\infty)$ by
$$\tilde u_0(\bx):=\frac{1}{4}v(\bx/2,\tilde t),$$
and note that 
$$\tilde u(\bx,t):=\frac{1}{4}v(\bx/2,\tilde t+t),$$
is a subsequent (incomplete Ricci flow) solution on $B$ for $t\geq 0$.
Our objective of proving the bound \eqref{main_prop2_assertion}
would then be implied by a bound
\beq
\label{u_tilde_bd}
\tilde u(\mathbf{0},\tilde t_0)\leq C\qquad\text{ at time }
\tilde t_0:=t_0-\tilde t=\frac{\de}{8\pi}.
\eeq
Unravelling \eqref{Lp_control} tells us that $\|\tilde u_0\|_{L^p(B)}\leq C(\de)$.
As before, we would like to define 
$$\tilde v_0(\bx):=\max\{\tilde u_0(\bx), h(\bx)\},$$
but instead take the slight smoothing
$$
\tilde v_0(\bx):=h(\bx)+\ga(\tilde u_0(\bx)-h(\bx))
$$
as in Section \ref{reduction_sect} and Figure \ref{fig2}.
By choosing $\mu\in (0,1]$ for our $\ga$, we can be sure that
\beq
\label{banana}
0\leq \tilde v_0-h\leq 1+\tilde u_0.
\eeq
We can then apply Corollary \ref{exp_cor} with $\tilde v_0$ in place of $v_0$
in order to estimate the subsequent flow $\tilde v(t)\geq \tilde u(t)$ by
\beq
\label{tilde_exp_est}
\frac{\tilde v(t)}{(2t+1)h}\leq \exp\left[1-\frac{\tilde\psi(0)}{t}\right],
\eeq
where, of course,
\begin{equation}
\label{tilde_psi_eq}
\left\{
	\begin{array}[]{l}
		\triangle \tilde\psi(0) = \tilde v_0 - h \\
		\tilde\psi(0)|_{\partial B}=0.
	\end{array}
	\right.
\end{equation}
However, instead of applying Theorem \ref{brezis_merle} of Brezis-Merle to estimate $\tilde\psi(0)$, we apply Calderon-Zygmund theory.
Equation \eqref{tilde_psi_eq} and the estimate \eqref{banana} give
$$\|\lap\tilde\psi(0)\|_{L^p(B)}
=\|\tilde v_0 - h\|_{L^p(B)}
\leq\|1+\tilde u_0\|_{L^p(B)}\leq C(\de).$$
Appealing to the zero boundary data tells us that
$$\|\tilde\psi(0)\|_{L^\infty(B)}\leq C\|\tilde\psi(0)\|_{W^{2,p}(B)}\leq C\|\lap\tilde\psi(0)\|_{L^p(B)}\leq C(\de),$$
and applying this to \eqref{tilde_exp_est} at the origin, at time 
$\tilde t_0:=t_0-\tilde t=\frac{\de}{8\pi}$ gives
$$\tilde v(\mathbf{0},\tilde t_0)\leq C$$
and hence \eqref{u_tilde_bd}, because $\tilde u(t)\leq \tilde v(t)$.
\qed

\section{Proofs of lemmas}
\label{proofs_of_lemmas}

We first want to prove Lemma \ref{lem:psi}, giving the existence and properties of $\psi$, but that proof will in turn use the following lemma, 
which describes the asymptotic behaviour of $v$ near $\partial B$.

\begin{lemma}
	\label{lem:vboundary}
For $v_0$ and $v$ as in Lemma \ref{lem:psi}, we have
$$\frac{v(t)}{(2t+1)h}(\bx) \to 1$$
uniformly as $(\bx,t)\to \partial B\times [0,\infty)$. 
\end{lemma}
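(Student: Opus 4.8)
The plan is to establish the boundary asymptotics of $v$ by trapping it between two explicit barrier Ricci flows near $\partial B$. The lower barrier is already available: since $v_0 \geq h$ and the flow is maximally stretched (indeed $v$ is the maximal complete flow), we have $v(t) \geq (2t+1)h$ for all $t \geq 0$, so $\frac{v(t)}{(2t+1)h} \geq 1$ globally, and it remains only to produce a matching upper bound near the boundary. For this I would exploit the hypothesis that $v_0 = h$ outside some compact set $K \subset B$, together with the fact that the complete hyperbolic Ricci flow $(2t+1)h$ is, in a suitable sense, \emph{stable} under compactly supported perturbations from above: one wants to show that the ``extra area'' $v_0 - h$, which lives in $K$, cannot instantaneously propagate out to $\partial B$ so as to perturb the ratio there.

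The cleanest route is a comparison argument using a slightly-stretched hyperbolic barrier. Fix $\rho \in (0,1)$ with $K \subset B_\rho$, and for the flow restricted to the annular region near $\partial B$ compare $v$ with flows of the form $(2t+1)h_s$ for $s$ slightly bigger than $1$ — these are complete hyperbolic Ricci flows on the smaller disc $B_s$, whose conformal factors dominate $(2t+1)h$ and which, crucially, blow up at $\partial B_s$ rather than at $\partial B$. Since $v_0 = h \leq h_s$ on $B_s \setminus B_\rho$ at $t = 0$ (for $s$ close enough to $1$, using that $h_s \to h$ locally uniformly as $s \downarrow 1$ away from $\partial B$, cf. Figure \ref{fig1} and Remark \ref{apply_to_smaller_balls}), and since both are solutions of $\pt w = \lap \log w$ with $v$ the maximal flow, one gets $v(t) \leq (2t+1)h_s$ on $B_s$ for all $t$ by the maximum-principle/maximality property from \cite{GT2, ICRF_UNIQ}. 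Letting $s \downarrow 1$ then squeezes $\limsup_{\bx \to \partial B}\frac{v(t)}{(2t+1)h}(\bx) \leq 1$, and combined with the lower bound this gives the pointwise limit; the uniformity in $(\bx,t) \to \partial B \times [0,\infty)$ follows because $v_0 \leq Ch$ globally, so the curvature of $v(t)$ stays bounded for all time (as noted in the excerpt, citing \cite{GT2}), which makes the barrier comparison effective on a fixed neighbourhood of $\partial B$ uniformly in $t$, with the $s$-dependence controlled independently of $t$ on compact time intervals and, for large $t$, absorbed because $h_s/h \to 1$ near $\partial B_1$ dominates the residual discrepancy at any fixed rate.

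An alternative, more self-contained approach avoids importing extra hyperbolic barriers: near $\partial B$ both $v$ and $(2t+1)h$ satisfy the same equation and have comparable (blowing-up) initial data, so writing $w := \frac{v}{(2t+1)h}$ and computing its evolution equation — it satisfies a uniformly parabolic equation with respect to the $v$-metric with an inhomogeneous term that vanishes when $w \equiv 1$, since $(2t+1)h$ is itself a solution — one can run a maximum principle for $w - 1$ on a collar neighbourhood of $\partial B$, using the boundedness of $w$ (from $(2t+1)h \leq v \leq Ch \leq C(2t+1)h$) to control the behaviour at the inner boundary of the collar and the fact that $w(0) = \frac{v_0}{h} = 1$ near $\partial B$ to control the outer boundary. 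This reduces the claim to a local parabolic estimate on a region where all coefficients are under control.

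The main obstacle, in either presentation, is handling the behaviour of $v$ exactly at $\partial B$ rigorously: the conformal factors $v(t)$ and $h$ both diverge there, so the comparison must be phrased so that one never literally evaluates anything at the boundary, and the uniformity of the convergence — both as $\bx \to \partial B$ and simultaneously as $t \to \infty$ — has to be extracted from the all-time curvature bound $v(t) \leq Ch$ rather than proved afresh. I expect the time-uniformity for large $t$ to be the subtlest point, and I would handle it by rescaling: for large $t$, parabolically rescale the flow on a fixed small collar of $\partial B$ so that the question becomes a short-time statement for a rescaled flow whose initial data is, near the boundary, a uniformly controlled perturbation of a fixed hyperbolic model, to which the short-time comparison applies with constants independent of $t$.
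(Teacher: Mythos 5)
Your overall strategy --- squeezing $v$ between the complete hyperbolic flow $(2t+1)h$ from below and a complete hyperbolic barrier flow from above via the maximality property of \cite{GT2, ICRF_UNIQ} --- is the paper's strategy, and your lower bound $v(t)\geq (2t+1)h$ is correct. The gap is in the choice of upper barrier. You propose $(2t+1)h_s$ with $s>1$ (a dilated disc $B_s\supset B$), asserting that $h_s$ dominates $h$ near $\partial B$. It does not: for $s>1$ one computes $h_s/h=\left(\frac{s(1-r^2)}{s^2-r^2}\right)^2\to 0$ as $r\to 1$, so $h_s$ stays \emph{bounded} on $\overline B$ while $v\geq (2t+1)h$ blows up there; the initial comparison ``$v_0=h\leq h_s$ near $\partial B$'' is false, and in any case the maximality argument requires the barrier to live on a \emph{subdomain} of $B$ (so that $v$ restricted to it is a competing flow), which $B_s$ is not. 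Taking $s<1$ fails for the opposite reason: $B_s$ misses a neighbourhood of $\partial B$. The barrier you need is the complete hyperbolic metric of a subdomain of $B$ that contains a collar of $\partial B$ but excludes the compact set where $v_0>h$. The paper takes the annulus $B\setminus\overline{B_a}$ with conformal factor $h_a$: since shrinking the domain increases the hyperbolic metric, $v_0=h<h_a$ on the annulus, so maximality of the complete flow $(2t+1)h_a$ there gives $v(t)\leq (2t+1)h_a$, and the explicit formula for $h_a$ shows $h_a/h\to 1$ as $r\upto 1$. This ratio is independent of $t$, so the uniformity in $t$ that you worry about at length (and propose to handle by parabolic rescaling and curvature bounds) is automatic and requires no extra work.

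Your ``alternative'' collar argument for $w=v/((2t+1)h)$ is not a viable substitute as described: you propose to control the \emph{outer} boundary of the collar using only the initial condition $w(0)=1$ near $\partial B$, but a parabolic maximum principle on a collar needs control of $w$ at $\partial B$ for all $t>0$, which is exactly the content of the lemma being proved. As written this is circular; making it rigorous would again force you back to barriers of the annulus type above.
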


Assuming for the moment that Lemma \ref{lem:vboundary} is true, we give a proof of Lemma \ref{lem:psi}.

\begin{proof}
	Instead of defining $\psi(t)$ by \eqref{eqn:psi1} and checking \eqref{eqn:psi2}, we define only $\psi(0)$ by \eqref{eqn:psi1}, i.e. we take $\psi(0)$ to be the solution to 
	\begin{equation}
	\left\{
		\begin{array}[]{l}
			\triangle \psi(0) = v_0 - h \\
			\psi(0)|_{\partial B}=0,
		\end{array}
		\right.
	\end{equation}
which will be smooth (even up to the boundary) and then we 
extend $\psi$ to $t>0$ by asserting \eqref{eqn:psi2}, i.e. we define
\begin{equation*}
	\psi(t)= \psi(0) + \int_0^t \pt\psi(s) ds = \psi(0) + \int_0^t \log \frac{v(s)}{(2s+1)h} ds.
\end{equation*}
Basic ODE theory tells us that $\psi\in C^\infty(B\times [0,\infty))$. We also see that $\psi\in C^0(\overline B\times [0,\infty))$, with $\psi(t)|_{\partial B}\equiv 0$ because 
$$\pt \psi (\bx,t)\to 0\qquad\text{ uniformly as }(\bx,t)\to\partial B\times [0,\infty)$$
by Lemma \ref{lem:vboundary}.
%
It remains to check the first part of \eqref{eqn:psi1}, i.e. that
$$\triangle \psi(t) = v(t) - (2t+1) h.$$
For that purpose, we compute
\begin{eqnarray*}
	\partial_t (\triangle \psi) &=& \triangle (\pt\psi) \\
	&=& \triangle \log v - \triangle \log h \\
	&=& \partial_t v - 2h \\
	&=& \partial_t (v - (2t+1) h),
\end{eqnarray*}
where we used \eqref{eqn:psi2}, the PDE satisfied by $v$, and \eqref{Kh}.

Since we know $\triangle \psi(0)= v(0)- h$, we have
\begin{equation*}
	\triangle \psi(t) = v(t) - (2t+1) h
\end{equation*}
for $t>0$ as required.
\end{proof}

\begin{proof} [Proof of Lemma \ref{lem:vboundary}]
By assumption, we know that $v_0\geq h$ on $B$, and that there exists
$a\in (0,1)$ such that on the annulus $B\backslash \overline{B_a}$ we have
$$v_0=h<h_0:=\left[\frac{1}{r(-\log r)}\right]^2< h_a:=
\left[\frac{\pi}{(-\log a)r\sin\left(\frac{\pi(-\log r)}{-\log a}\right)}\right]^2,$$
where $h_0$ and $h_a$ are the conformal factors of the unique complete conformal hyperbolic metrics on $B\backslash \{0\}$ and $B\backslash \overline{B_a}$ respectively.
These inequalities can be computed directly (for example using that $\sin(x)<x$ for $x\in (0,\pi)$) though they follow instantly from the maximum principle, which tells us that when we reduce the domain, the hyperbolic metric must increase.

The unique complete solution $v(t)$ starting at $v_0$ is maximally stretched on $B$, and so $(2t+1)h\leq v(t)$, while on $B\backslash \overline{B_a}$, the solution
$(2t+1)h_a$, being complete, must also be maximally stretched and so
$v(t)\leq (2t+1)h_a$ on this annulus (see \cite{ICRF_UNIQ}).
Therefore, we conclude that for all $t\geq 0$, we have 
$$1\leq \frac{v(t)}{(2t+1)h}\leq 
\frac{h_a}{h}=
\left[\frac{\pi(1-r^2)}{2(-\log a)r\sin\left(\frac{\pi(-\log r)}{-\log a}\right)}\right]^2
\to 1
$$
uniformly as $r\upto 1$ as required.
%
%
\end{proof}

We now prove the Harnack Lemma \ref{harnack_lemma}, but first we need to carefully state an appropriate maximum principle, which follows (for example) from the much more general \cite[Theorem 12.22]{chowIIanalytic} using the Bishop-Gromov volume comparison theorem.

\begin{thm}\label{thm:maximum}
	Suppose $g(t)$ is a smooth family of complete metrics defined on a smooth manifold $M$ of any dimension, for $0\leq t\leq T$, with Ricci curvature bounded from below and $\abs{\partial_t g}\leq C$ on $M\times [0,T]$. 
Suppose $f(x,t)$ is a smooth function defined on $M\times [0,T]$ that is bounded above and satisfies
	\begin{equation*}
		\partial_t f \leq \triangle_{g(t)} f.
	\end{equation*}
If $f(x,0)\leq 0$ for all $x\in M$, then $f\leq 0$ throughout $M\times [0,T]$.
\end{thm}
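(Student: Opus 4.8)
The plan is to prove Theorem~\ref{thm:maximum} by a perturbed maximum principle: subtract from $f$ a small multiple of a proper exhaustion function whose time-dependent Laplacian is controlled, apply the elementary maximum principle to the perturbed quantity, and then let the perturbation tend to zero. Concretely, fix $x_0\in M$ and suppose --- this is the technical heart, discussed below --- that there is a smooth function $\chi:M\to[0,\infty)$ which is \emph{proper} (i.e. $\chi^{-1}([0,R])$ is compact for every $R$) and satisfies $\triangle_{g(t)}\chi\leq C_0$ on $M\times[0,T]$ for some constant $C_0$. For $\ep>0$ set
\[
F_\ep:=f-\ep\bigl(\chi+1+(C_0+1)t\bigr).
\]
Since $f$ is bounded above, say by $\Lambda$, and $\chi$ is proper, $F_\ep\leq\Lambda-\ep(\chi+1)\to-\infty$ as one leaves compact subsets of $M$, uniformly in $t\in[0,T]$; hence $F_\ep$ attains its maximum on $M\times[0,T]$.

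Suppose for contradiction that this maximum is positive, attained at $(x_1,t_1)$. On $\{t=0\}$ we have $F_\ep=f(\cdot,0)-\ep(\chi+1)\leq -\ep<0$ (using $f(\cdot,0)\leq 0$ and $\chi\geq 0$), so $t_1>0$. The first- and second-derivative tests at an interior-in-time maximum give $\partial_tF_\ep(x_1,t_1)\geq0$ and $\triangle_{g(t_1)}F_\ep(x_1,t_1)\leq0$ (the former being only one-sided if $t_1=T$, which still suffices). On the other hand, using the differential inequality for $f$ and the bound on $\triangle_{g(t)}\chi$,
\[
\partial_tF_\ep-\triangle_{g(t)}F_\ep
=\bigl(\partial_tf-\triangle_{g(t)}f\bigr)-\ep(C_0+1)+\ep\,\triangle_{g(t)}\chi
\leq -\ep(C_0+1)+\ep C_0=-\ep<0
\]
everywhere on $M\times[0,T]$, contradicting the derivative tests at $(x_1,t_1)$. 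Therefore $F_\ep\leq0$, i.e. $f\leq\ep\bigl(\chi+1+(C_0+1)t\bigr)$; letting $\ep\downto0$ at each fixed $(x,t)$ yields $f\leq0$, as required.

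The main obstacle is the construction of the exhaustion function $\chi$, and this is where the hypotheses $\Ric\geq$ const and $\abs{\partial_t g}\leq C$ enter. The idea is to take $\chi$ to be a smoothing of the $g(0)$-distance function $r(x)=d_{g(0)}(x_0,x)$, modified near $x_0$ so as to be smooth there: Laplacian comparison (equivalently Bishop--Gromov volume comparison) for a metric with $\Ric\geq -(n-1)K$ gives $\triangle r\leq (n-1)\sqrt{K}\coth(\sqrt{K}\,r)\leq (n-1)\bigl(\sqrt K+1/r\bigr)$ in the barrier sense, which is bounded on $\{r\geq 1\}$, while on $\{r\leq 1\}$ the modified function is smooth on a compact set, so its Laplacian is automatically bounded. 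Two technical points must be handled with care: the distance function is only Lipschitz (failing to be smooth at $x_0$ and along the cut locus), which is dealt with via barrier/support functions or a Greene--Wu type smoothing; and the comparison estimate must be transferred to every $g(t)$, for which one uses that the metrics $g(t)$ are uniformly equivalent on $[0,T]$ (a consequence of $\abs{\partial_t g}\leq C$) together with the hypothesis that each $g(t)$ has Ricci curvature bounded below. If one only obtains linear growth $\triangle_{g(t)}\chi\leq C_0(1+\chi)$ rather than a uniform constant bound, the argument still goes through with $F_\ep:=f-\ep(\chi+1)e^{Bt}$ for $B>C_0$ in place of the affine perturbation. It is precisely these standard but fiddly details --- rather than any conceptual difficulty --- that make it most economical to invoke the general maximum principle of \cite{chowIIanalytic}; the computation above is the skeleton of that argument.
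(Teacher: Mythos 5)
The paper does not actually prove this theorem: it is quoted as a consequence of the much more general \cite[Theorem 12.22]{chowIIanalytic}, whose volume-growth hypothesis is verified via Bishop--Gromov under the Ricci lower bound (that reference's argument is of weighted-integral, Karp--Li type, not a barrier argument). Your perturbed-maximum-principle skeleton is a legitimate alternative route and the reduction is correct: \emph{if} a proper $\chi\geq 0$ with $\triangle_{g(t)}\chi\leq C_0$ (or $\leq C_0(1+\chi)$) uniformly in $t\in[0,T]$ exists, then the argument with $F_\ep$ is complete and elementary, and it genuinely uses the upper bound on $f$ and the properness of $\chi$ to guarantee the maximum is attained.

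The gap is in the construction of $\chi$, which is the entire analytic content here. You propose to build $\chi$ from the \emph{fixed} distance function $r=d_{g(0)}(x_0,\cdot)$ and to ``transfer'' the Laplacian comparison estimate to every $g(t)$ using the uniform equivalence of the metrics coming from $\abs{\partial_t g}\leq C$. That mechanism does not work. First, $\triangle_{g(t)}\chi=g(t)^{ij}\partial_i\partial_j\chi-g(t)^{ij}\Gamma(t)^k_{ij}\partial_k\chi$ involves the Christoffel symbols of $g(t)$, i.e.\ first \emph{spatial} derivatives of $g(t)$, which are not controlled by $\abs{\partial_t g}\leq C$ (uniform equivalence of the metrics says nothing about their derivatives). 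Second, and more fundamentally, Laplacian comparison for $g(0)$ controls only the trace $\triangle_{g(0)}r$, not the full Hessian $\nabla^2_{g(0)}r$ (which can blow up, e.g.\ near the cut locus, even as a one-sided barrier); so even knowing $g(t)^{ij}$ is comparable to $g(0)^{ij}$ you cannot estimate $g(t)^{ij}(\nabla^2_{g(0)}\chi)_{ij}$ from $\triangle_{g(0)}\chi$. The standard repair is to use the \emph{time-dependent} barrier $\chi(x,t)=\phi(d_{g(t)}(x_0,x))$: the Ricci lower bound of each $g(t)$ gives $\triangle_{g(t)}d_{g(t)}(x_0,\cdot)\leq C(1+1/d)$ in the barrier sense (Calabi's trick at the cut locus and at $x_0$), and the hypothesis $\abs{\partial_t g}\leq C$ is used only to bound $\abs{\partial_t\, d_{g(t)}(x_0,x)}\leq C\,d_{g(t)}(x_0,x)$, so the extra term $-\ep\,\partial_t\chi$ appearing in $\partial_tF_\ep-\triangle_{g(t)}F_\ep$ is absorbed by taking $B$ large in the $e^{Bt}$ variant you mention. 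With that modification (or by switching to the integral method the paper implicitly invokes) the proof closes; as written, the key uniform-in-$t$ Laplacian bound on $\chi$ is asserted but not justified.
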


\begin{proof}[Proof of Lemma \ref{harnack_lemma}]
Direct computation using \eqref{H_form1}, \eqref{eqn:laplacevarphi} and \eqref{eqn:equationvarphi} shows that
\beqa
\label{pth_comp}
\partial_t \h&= \log\lap\vph+t \frac{\pt \lap\vph}{\lap\vph}-\pt\vph(t)\\
&=-1+t \frac{\lap\pt \vph}{v},
\eeqa
while computing using \eqref{H_form2} gives
\beq
\lap \h= t\lap\pt\vph-\lap\vph(t)+\lap\vph(0),
\eeq
and hence from \eqref{eqn:laplacevarphi}  we find that
$$\lap_v \h= t\frac{\lap\pt\vph}{v}-1+\frac{v_0}{v}.$$
Combining with \eqref{pth_comp} gives
$$\pt \h-\lap_v\h=-\frac{v_0}{v}\leq 0,$$
as required.

Let $g(t)$ be the family of smooth metrics corresponding to $v(t)$. Since we know the curvature of $g(t)$ is bounded, it satisfies the assumptions in Theorem \ref{thm:maximum}. 
Moreover, as discussed in Remark \ref{HMPrmk}, $\h$ is bounded and equals zero initially, so Theorem \ref{thm:maximum} then implies that $\h\leq 0$ for all $t\geq 0$.
\end{proof}

\section{\texorpdfstring{$L^p-L^\infty$ smoothing results}{Lp smoothing results}}
\label{DDDsect}

In this section, we prove the following result that implies Theorem \ref{DDD} by setting $1+\de=4\pi$.
\begin{thm}
\label{DDDstrong}
Suppose $u:B\times [0,T)\to (0,\infty)$ is a smooth solution to the equation
$$\pt u=\lap\log u$$
on the unit ball $B\subset\R^2$, and suppose that $u_0:=u(0)\in L^p(B)$ for some $p>1$. Then for all $\de>0$, there exists $C=C(\de)<\infty$ such that for any $t\in (0,T)$
we have
\beq
\label{DDD_super_est}
\sup_{B_{1/2}}u(t)\leq C\left[
\left(\frac{4\pi t}{1+\de}\right)^{-1/(p-1)}\|u_0\|_{L^p(B)}^{p/(p-1)} + t\right].
\eeq
\end{thm}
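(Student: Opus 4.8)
The plan is to reduce Theorem \ref{DDDstrong} to Theorem \ref{mainthmstrong} (equivalently, to the already-established assertions $\p_{m,\al}$) by choosing the comparison parameter $\al$ appropriately and optimizing. First I would fix a time $t\in(0,T)$ and set $\al=\al(t)\geq 0$ to be chosen later. Since $h\geq 4$ on $B$, and more conveniently, since the Lebesgue measure dominates a fixed multiple of the hyperbolic measure near the origin but we in fact only need the crude pointwise bound $h\geq 4$, I estimate
\beq
\label{ineq_plan}
\|(u_0-\al h)_+\|_{L^1(B)}\leq \|(u_0-4\al)_+\|_{L^1(B)}\leq \int_{\{u_0>4\al\}}u_0
\leq (4\al)^{-(p-1)}\|u_0\|_{L^p(B)}^p,
\eeq
using H\"older's inequality (or just $u_0\leq u_0^p(4\al)^{-(p-1)}$ on the set where $u_0>4\al$) in the last step. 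Thus $m:=\|(u_0-\al h)_+\|_{L^1(B)}\leq (4\al)^{-(p-1)}\|u_0\|_{L^p(B)}^p=:M(\al)$.

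Next I would apply Theorem \ref{mainthmstrong} with this $\al$ and with $\de$ as given: the hypothesis required is $t\geq \frac{m}{4\pi}(1+\de)$, and since $m\leq M(\al)$, it suffices to arrange $t\geq \frac{M(\al)}{4\pi}(1+\de)$, i.e.
$$\al\geq \tfrac14\left(\frac{(1+\de)\|u_0\|_{L^p(B)}^p}{4\pi t}\right)^{1/(p-1)}=:\al_0(t).$$
Choosing $\al=\al_0(t)$ makes the hypothesis hold with equality, and Theorem \ref{mainthmstrong} then yields
$$\sup_B \frac{u(t)}{h}\leq 2t+C\left(\al_0(t)+m\right)\leq 2t + C\left(\al_0(t)+M(\al_0(t))\right)
= 2t + C\left(\al_0(t) + \tfrac{4\pi t}{1+\de}\right),$$
where in the last equality I used that with $\al=\al_0$ one has $M(\al_0)=\frac{4\pi t}{1+\de}$ by construction. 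Restricting to $B_{1/2}$, where $h\leq 64/9$, gives $\sup_{B_{1/2}}u(t)\leq C'\big(\al_0(t)+t\big)$, and since $\al_0(t)$ is (up to the constant $C(\de)$) exactly $\left(\frac{4\pi t}{1+\de}\right)^{-1/(p-1)}\|u_0\|_{L^p(B)}^{p/(p-1)}$, this is precisely \eqref{DDD_super_est}.

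The only genuine subtlety — and the step I would be most careful with — is the case $\al_0(t)=0$, which cannot happen since $\|u_0\|_{L^p}>0$ is forced (the solution is positive), but one should also handle the degenerate possibility that $\frac{M(\al_0)}{4\pi}(1+\de)$ exceeds $T$, in which case, as in the proof of Claim 1, there is simply nothing to prove for that $t$; so one only needs $t<T$, which is assumed. A second point worth stating explicitly is that the reduction \eqref{ineq_plan} only uses $h\geq c$ for a fixed constant $c>0$ on $B$, so no issue arises from the blow-up of $h$ at $\partial B$. Finally, to recover Theorem \ref{DDD} one sets $1+\de=4\pi$, absorbing the resulting numerical factor into the universal constant, and notes that the constant then no longer depends on $p$, which is the advertised improvement over \cite{DDD, Vazquez}. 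I expect no real obstacle here; the content is entirely in Theorem \ref{mainthmstrong}, and this section is the easy deduction.
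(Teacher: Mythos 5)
Your argument is correct and is essentially the paper's proof: the paper likewise reduces to the $L^1$--$L^\infty$ smoothing theorem by choosing a cut level whose super-level $L^1$ mass equals $\frac{4\pi t}{1+\de}$ and bounding that level by the same Chebyshev/H\"older inequality $\|u_0\|_{L^p(B)}^p\geq k^{p-1}\|(u_0-k)_+\|_{L^1(B)}$. The only cosmetic differences are that the paper invokes Theorem \ref{mainthm} with a constant $k$ defined implicitly by $t=\frac{\|(u_0-k)_+\|_{L^1(B)}}{4\pi}(1+\de)$ (splitting off the trivial case $k=0$), whereas you invoke Theorem \ref{mainthmstrong} with an explicit $\al_0(t)$ chosen to saturate the time hypothesis; both routes yield \eqref{DDD_super_est} with $C=C(\de)$.
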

\begin{proof}
First suppose that $t\geq \frac{\|u_0\|_{L^1(B)}}{4\pi}(1+\de)$. Then Theorem \ref{mainthm}
applies with $k=0$ to give 
$$\sup_{B_{1/2}}u(t)\leq Ct,$$
which is stronger than \eqref{DDD_super_est}.

If instead we have $t< \frac{\|u_0\|_{L^1(B)}}{4\pi}(1+\de)$, then
we can find $k\geq 0$ such that
$t=\frac{\|(u_0-k)_+\|_{L^1(B)}}{4\pi}(1+\de)$, and then apply Theorem \ref{mainthm} to find that
\beq
\label{mainthmapp}
\sup_{B_{1/2}}u(t)\leq C(t+k).
\eeq
In order to estimate $k$ in terms of $t$, we compute
$$
\|u_0\|_{L^p(B)}^{p} \geq \int_{\{u_0\geq k\}}u_0^p
\geq k^{p-1}\|(u_0-k)_+\|_{L^1(B)}=
k^{p-1}\left(\frac{4\pi t}{1+\de}\right)
,$$
and thus 
$$k\leq \left(\frac{4\pi t}{1+\de}\right)^{-\frac{1}{p-1}}
\|u_0\|_{L^p(B)}^{\frac{p}{p-1}},$$
which when substituted into \eqref{mainthmapp} gives the result.
\end{proof}

\emph{Acknowledgements:} The first author was supported by EPSRC grant number EP/K00865X/1 and the second author was supported by NSFC 11471300.

{\sc mathematics institute, university of warwick, coventry, CV4 7AL,
uk}\\
\url{http://www.warwick.ac.uk/~maseq}

{\sc School of mathematical sciences, university of science and technology of China, Hefei, 230026, China}

\end{document}